\DeclareMathOperator*{\argmin}{arg\,min}
\newtheorem{theorem}{Theorem}[section]
\newtheorem{lemma}[theorem]{Lemma}
\newtheorem{thm}{Theorem}[section]
\newtheorem{rem}[thm]{Remark}
\title{Identification of the heterogeneous conductivity in an inverse heat conduction problem}
\author[1]{Angel Ciarbonetti\thanks{Corresponding author: aciarbonetti@santafe-conicet.gov.ar}}
\author[2]{Sergio Idelsohn\thanks{sergio@cimne.upc.edu}}
\author[1,3]{Ruben D. Spies\thanks{rspies@santafe-conicet.gov.ar}}
\affil[1]{Instituto de Matem\'{a}tica Aplicada del Litoral, IMAL, CONICET-UNL, Centro Cient\'{\i}fico Tecnol\'{o}gico CONICET-Santa Fe,
Argentina}
\affil[2]{CIMNE International Centre for Numerical Methods in Engineering, Barcelona, Spain}
\affil[3]{Departamento de Matem\'{a}tica, Facultad de Ingenier\'{\i}a Qu\'{\i}mica, Universidad Nacional del Litoral, Santa Fe, Argentina}
\begin{document}
\maketitle
\begin{abstract}
This work deals with the problem of determining a non-homogeneous heat conductivity profile in a steady-state heat conduction boundary-value problem with mixed
Dirichlet-Neumann boundary conditions over a bounded domain in $\mathbb{R}^n$, from the knowledge of the state over the whole domain. We develop a method based on a
variational approach leading to an optimality equation which is then projected into a finite dimensional space. Discretization yields a linear although severely
ill-posed equation which is then regularized via appropriate ad-hoc penalizers resulting a in a generalized Tikhonov-Phillips functional. No smoothness assumptions
are imposed on the conductivity. Numerical examples for the case in which the conductivity can take only two prescribed values (a two-materials case) show that the
approach is able to produce very good reconstructions of the exact solution.
\end{abstract}

%\begin{keyword}
%
\textit{\textbf{Keyword}:Heat-conduction, Elliptic Boundary-value Problem, Inverse Problems, Regularization, Tikhonov-Phillips, Thermal Materials Design}
%
%
%\end{keyword}

%% main text
\section{Introduction}
\label{intro}
The study, analysis and numerical solution of \textit{Inverse Heat Transfer Problems} (IHTP), has a significant role in a large number of engineering applications.
Aerospace, mechanical, chemical and nuclear engineering, materials design, are just few of the many areas in which developing appropriate and efficient tools for
solving IHTP is of special interest.

In IHTP in general, one seeks to estimate unknown functions or parameters, for example unknown flow conditions or physical parameters of the material, from indirect
measurements of the state, that is of the temperature field, which more often than not, in practical cases is contaminated by noise and/or can be acquired only at a
few discrete points over the domain under study.

%This makes it usual to study the IHTP based on the physical predominance of the variables of interest inherent to the direct problem, thus treating the reverse
%conduction, convection or radiation problems separately.

One of the engineering branches that has aroused the greatest interest in IHTP is electronic engineering, where thermal dissipation problems inside equipments or
components are usually studied. These types of problems are generally associated with thermal conduction processes (see \cite{convex2004}, \cite{JANICKI199851}).
This, like other growing branches in the last two decades, have generated a genuine interest in deepening the study of the thermal problems associated with the
design of materials and thermal devices, seeking new material design methodologies, as well as efficient and cost-effective solutions.
%In summary the general problem under study in these and other cases is that of a thermal problem in an heterogeneous media.

Different methodological approaches to deal with the problem of designing thermal materials and devices in heterogeneous media having their roots in mathematical
modeling, numerical solutions and computational mechanics have recently emerged. Among them, perhaps the most widely known is the computational design of thermal
materials in heterogeneous media. This methodology arises from the application of optimization methods which were originally designed for solving solid mechanics
problems (\cite{Bendsoe2003}, \cite{bendose2003_1}). Although this technique has shown to yield reasonable results (see for instance \cite{fachinotti016},
\cite{PERALTA2017}, \cite{fachinotti2018}), it has some severe drawbacks both in regard to the uniqueness of solutions (which appears to be highly dependent on the
set of parameters used in the optimizing algorithm), as well as from the post-processing requirements of the obtained results.

Taking as inspirational base the optimization methods approach, we propose to tackle the problem of design of thermal materials in heterogeneous media by embedding
it within the IHTP framework.

Undoubtedly the largest difficulty in dealing with IHTP problems is the fact that they are severely ill-conditioned which is a direct consequence of the violation of
Hadamard's third postulate (\cite{ref:Hadamard-1902}): lack of continuous dependence of the true solution on the data. Quite often there are additional complications
coming from the lack of good measurements of the temperature field or the the fact they are only available at a few scarce points. That makes extremely difficult to
obtain good estimates of the heat flow which is generally needed for solving most IHTP.

Several variational models widely used for the numerical solution of IHTP generally introduce simplification hypotheses on the constitutive models and/or on the
boundary conditions. For example in \cite{chinyu98}, the authors consider conductivity to be a polynomial form of the temperature. Similarly in
\cite{MIERZWICZAK2011}, the model used for the IHTP problem assumes a linear dependence between conductivity and temperature. Although these simplifying hypotheses
allow the successful implementation of conventional schemes for obtaining good numerical solutions, they are far from reflecting most real world problems.

% There are cases like the one presented in \cite{CUI2012113}, where the estimation of the thermal properties is presented even in a non-stationary case,
%  although for the material design problem it lacks practicality at first.

In our case, the IHTP is associated with identifying an unrestricted heterogeneous conductivity profile in two dimensions from measurements of the temperature in a
steady-state regime. From the materials design approach, an arbitrary field of temperatures and boundary conditions can be initially imposed so that they be
consistent with the constraints of the expected design solution. % This perspective in IHTP is innovative in the state of the art.

Heterogeneity adds a considerable degree of difficulty to an IHTP, however, that assumption corresponds to the problems of greatest practical interest, both in the
analysis of thermographic images as in the material design problem in which we focus this contribution.

Implementation and numerical resolution of any IHTP will, at some point, require solving an optimization problem combined with some type of regularization to deal
with ill-posedness.

In \cite{huang2000two} the authors deal with the problem of identifying the conductivity profile from the temperature map. A simplifying assumption consisting of a
direct dependence on conductivity and temperature is made together with pure Neumann-type boundary conditions. Under these hypotheses the authors solve the problem
using a conjugate gradient approach. A similar approach using conjugate gradient is used in \cite{rodriguez2012inverse} where the authors solve the inverse problem
of finding the geometry of a defect or inclusion from an IR image.

In this work, a novel approach for solving the identification of an heterogeneous conductivity profile based on a variational approach of the PDE model followed by
appropriate discretization and regularization is introduced. The model allows for mixed Dirichlet-Neumann boundary conditions, and no restrictive assumptions are
made on the conductivity profile. The approach is numerically tested with 2D examples although the setting is not restricted to two dimensions.

\subsection{A brief historical mathematical tracking of the problem}
In this article we consider the problem of determining the elliptic coefficient profile function in an homogeneous elliptic boundary value problem.

Several authors have worked on this type of problems before, as they appear in several areas and concrete applied problems such as electrical conductivity problems,
oil resevoir and ground water flow problems (\cite{refb:Bear-1972}, \cite{ref:Bongiorno-Valente-1977}, \cite{Knowles-Wallace-1996}, \cite{Kohn-Lowe1988},
\cite{Yeh-1986}) among others.

In a 1980 article (\cite{Calderon-1980}), A. P. Calder\'{o}n considered the following problem. Let $\Omega \subset \mathbb{R}^n$, $n\ge 2$ be a bounded domain with
Lipschitz boundary $\partial \Omega$ and define
\begin{equation}\label{domain-of-M}
    L_{>0}^\infty(\Omega)\;\doteq\;\left\{ k \in L^\infty(\Omega) \;:\; \exists\,\epsilon>0 \textrm{ such that }k(x)\ge\epsilon>0,\; \forall x\in \Omega\right\}.
\end{equation}
Further, for $k \in L_{>0}^\infty(\Omega)$ let $L_k$ be the differential operator
\begin{equation}\label{operator}
    L_k(u)\;=\;\nabla.(k \nabla u),
\end{equation}
with domain $D(L_k)\doteq H^1(\Omega)$ and the quadratic form $Q_k:D(Q_k)\subset L_\infty(\partial \Omega) \to \mathbb{R}_0^+$, defined by the Dirichlet integral
\begin{equation} \label{Qgamma}
    Q_k(\varphi)=\int_\Omega k(x)\left(\nabla u_\varphi(x)\right)^2\;dx\;=\; \int_{\partial\Omega}\varphi(x)k(x)\frac{\partial u_\varphi}{\partial \nu}\; ds,
\end{equation}
where $\nu$ denotes the outward normal to $\partial \Omega$ and $u_\varphi\in H^1(\Omega)$ is the solution of the Dirichlet boundary value problem
\begin{equation} \label{IBVP}
    \begin{cases} L_k(u)=0, &x\in\Omega, \\
        u=\varphi, & x\in\partial\Omega.
    \end{cases}
\end{equation}
It is timely to mention here that for $\varphi$ in the space of traces of functions of $H^1(\Omega)$, the IBVP (\ref{IBVP}) has in fact a unique solution $u \in
H^1(\Omega)$ (see \cite{Stampacchia-1966}).

The problem considered by Calder\'{o}n was to decide wether the function $k$ is uniquely determined by the quadratic form $Q_k$ and, if that is true, try to compute $k$
in terms of $Q_k$. He was able to show that, with an appropriate norm defined in the space of quadratic forms, the mapping
\begin{equation}\label{mappingM}
    M:k\longrightarrow Q_k
\end{equation}
is bounded and analytic in $L_{>0}^\infty(\Omega)$. Moreover, although the general problem escaped its proof at that time, Calder\'{o}n was able to show that for the
linearized problem, the answer is affirmative, that is, $d\,M |_{k=const.}$ is an injective mapping. In addition he showed that if the function $k$ is ``close
enough'' to being a constant, then $k$ is ``nearly'' determined by its quadratic form $Q_k$ and derived a bound for the $L^\infty$-norm of the error (see
\cite{Calderon-1980} for details).

The previous result for ``nearly constant'' functions was later extended and formalized by Sylvester and Uhlmann (\cite{Silvester-Uhlmann-1986}) only for the case
$n=2$.

In 1984, Kohn and Vogelius (\cite{Kohn-Vogelius-1984}) showed that if $k$ is real analytic, then it can be uniquely determined from the knowledge of its Dirichlet
integral $Q_k$. This result was later extended to piecewise real analytic functions $k$ in 1985 by the same authors (\cite{Kohn-Vogelius-1985}).

In 1987, Sylvester and Uhlmann (\cite{Silvester-Uhlmann-1987}) showed that for functions $k$ sufficiently smooth, the quadratic form $Q_k$ does indeed uniquely
determine the function $k$. More precisely, they showed that the mapping $M$ in (\ref{mappingM}) is injective over $C^\infty(\overline\Omega)\cap
L_{>0}^\infty(\Omega)$.

The case of determining $k$ in (\ref{IBVP}) from information about $u$ on the whole domain $\Omega$ was studied by several authors. It is clear that some assumptions
on $k$ and/or on $u$ must be required. Thus, for instance, it is evident that $k$ cannot be uniquely determined in any subregion of $\Omega$ where $\nabla u=0$.
However, if $|\nabla u| >0$ everywhere on $\Omega$, once the values of $k$ are given on a hypersurface transversal to $\nabla u$, the method of characteristics will
yield a unique solution. Several authors obtained similar uniqueness results under weaker assumptions on $\nabla u$ under diverse assumptions on $k$ (e.g.
\cite{Alessandrini-1984}, \cite{Alessandrini-1986}, \cite{Bongiorno-Valente-1977}, \cite{Richter-1981}).

There are several articles devoted to the problem of recovering $k$ from information about $u$. However all of them assume some degree of smoothness on $k$ (at least
differentiability), which is almost never true in practical applications, where at best, only piecewise smoothness and jump discontinuities are to be expected, and
quite often the available data consists only of noisy measurements at some discrete points. Although the mathematical theory of elliptic equations with discontinuous
principal coefficients is well known (\cite{Stampacchia-1966}, \cite{Littman-Stampacchia-Weinberger-1963}), there is not much done on the inverse problem of
recovering $k$ in these cases.

In this article we develop a method for approximating the perhaps discontinuous principal coefficient function based on a regularized variational approach. This
approach takes appropriate care of the strong ill-posedness of the inverse problem which has been already carefully reported by several authors (see for instance
\cite{Yakowitz-Duckstein-1980}, \cite{refb:Engl-Hanke-96}), and is characteristic in all inverse heat conduction problems. Several numerical examples are presented
that show that the method is able to yield very good approximations of $k$ even in the case of discontinuous profiles.

%%%%%%%%%%%%%%%%%%%%%%%%%%%%%%%%%%%%%%%%%%%%%%%%%%%%%%%%%%%%%%%%%%%%%%%%%%%%%%%%%%
\section{Methodology }
\subsection{Preliminaries}

Let $\Omega \subset \mathbb{R}^n$ ($n\ge 2$) a bounded open set with smooth boundary $\Gamma=\partial\Omega \,=\,\overline\Gamma_1\cup\overline\Gamma_N$,
with $\Gamma_1\cap\Gamma_N=\emptyset$, $ c, k, f, g, h \in L^2(\Omega)$ with $0<\gamma_1\le k(x)\le \gamma_2$ and $ c\ge 0$, and consider the following problem,
as described in Figure 1:
\begin{equation}\label{mainequation}
    {\cal{P}}={\cal{P}}(k,c,f,g,h):\;\begin{cases} -{\textrm{div}}(k(x)\nabla u(x))+c(x)u(x)=f(x), &x\in\Omega, \qquad (a) \\
        u(x)=g(x),&x\in\Gamma_D, \hfill(b) \\
        k(x)\,\nabla u(x) \cdot \vec n =h(x), &x\in\Gamma_N. \hfill (c)
    \end{cases}
\end{equation}
\begin{figure}[h!]\
    \centering
    \includegraphics[width=0.65\textwidth]{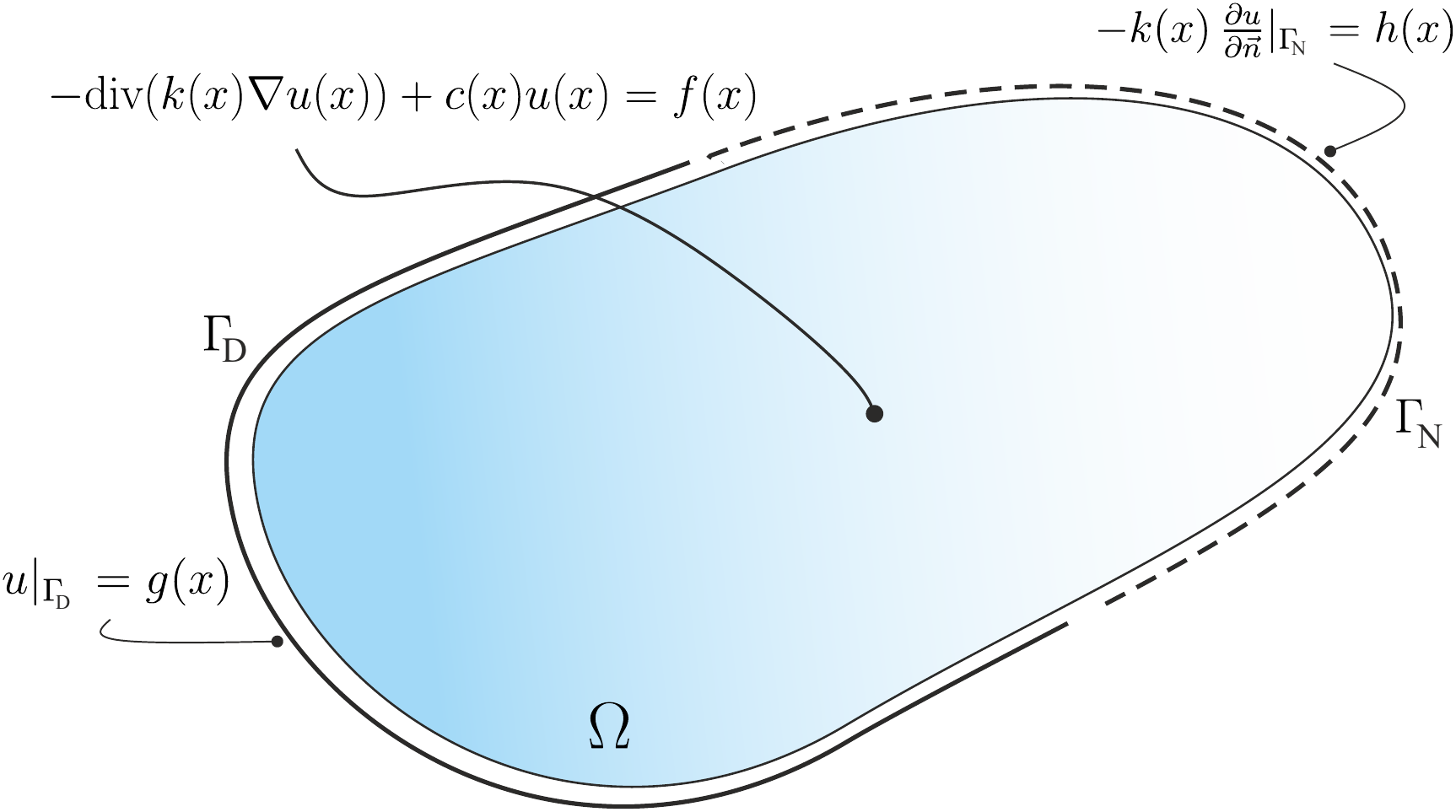} \\
    \caption{Schematic representation of problem ${\cal{P}}(k,c,f,g,h)$ }
    \label{figure1}
\end{figure}

Assume further that $g\in C(\Gamma_D)$ and define the affine subset of $ H^1(\Omega)$
\begin{equation}\label{affinesubset}
    H^1_{\Gamma_D,g}(\Omega)\;=\; \left\{ v\in  H^1(\Omega)\; :\; v|_{\Gamma_D}=g \right\}
\end{equation}

Multiplying equation $(a)$ in (\ref{mainequation}) by $v\in H^1_{\Gamma_D,0}(\Omega)$ and integrating we obtain:

\begin{align*}
    0&= -\int_\Omega\textrm{div}(k\nabla u)v\;dx +\int_\Omega cuv\;dx -\int_\Omega fv\;dx \\
    &=\int_\Omega\langle k\nabla u,\nabla v\rangle\;dx -\int_{\partial\Omega}\langle k\nabla u, \vec n\rangle v\; ds +\int_\Omega cuv\;dx -\int_\Omega fv\;dx \\
    &=\int_{\Omega}\left(\langle k \nabla u,\nabla v\rangle + cuv\right)\; dx - \int_\Omega fv\;dx - \int_{\Gamma_D\cup\Gamma_N}\langle k\nabla u, \vec n\rangle v\; ds \\
    &= \int_\Omega \left(\langle k \nabla u,\nabla v\rangle + cuv\right)\; dx - \int_\Omega fv\;dx \\ & \qquad - \int_{\Gamma_N} h v\; ds
    \qquad\qquad {\textrm{(since $ v|_{\Gamma_D=0}$ and  $ k\nabla u\cdot \vec n|_{\Gamma_N}=h$})} \\
    &\doteq F(u,v),
\end{align*}
where $\langle\cdot,\cdot\rangle$ denotes the usual inner product in $ L^2(\Omega)$ and $\vec n$ is the outer normal to $\partial \Omega$. Hence, the variational
formulation of problem ${\cal{P}}$ is the following.
%\vspace{.1in}

\medskip
$VF(\cal{P})$: {\it Find $u$ in $H^1_{\Gamma_D,g}(\Omega)$ such that $F(u,v)=0$ for all $v\in H^1_{\Gamma_D,0}(\Omega)$, i.e. such that }
\begin{equation}\label{problem-wf}
    \int_\Omega \left(\langle k \nabla u,\nabla v\rangle + cuv\right)\; dx \;=\; \int_\Omega fv\;dx + \int_{\Gamma_N} h v\; ds, \qquad\forall v\in
    H^1_{\Gamma_D,0}(\Omega).
\end{equation}
Now define the continuous symmetric bilinear form $B_{k,c}:H^1(\Omega)\times H^1(\Omega) \to\mathbb{R}$ by
\begin{equation}\label{bilinearform}
    B_{k,c}(u,v)\;\doteq\; \int_\Omega\left( \langle k \nabla u,\nabla v\rangle + cuv \right)\; dx.
\end{equation}
Then, if $c>0$, $B_{k,c}$ defines an inner product on $H^1(\Omega)$ (note also that if $c>0$ then $B_{k,c}$ is $H^1(\Omega)$-elliptic) with associated norm
\begin{equation}\label{a-norm}
    \| u \|^2_{B_{k,c}} \;\doteq\;  \int_\Omega\left(k\|\nabla u\|^2 + c|u|^2\right)\; dx.
\end{equation}
Define also the energy functional $J:H^1_{\Gamma_D,g}(\Omega)\to \mathbb{R}$ by
\begin{align}\label{energyfunctional}
    J(v)\; &\doteq \; \frac12 B_{k,c}(v,v)-\int_\Omega fv\;dx -\int_{\Gamma_N} hv \;ds \nonumber \\
    &= \frac12\int_\Omega \left( \langle k \nabla v,\nabla v\rangle + c\,v^2 \right)\; dx-\int_\Omega fv\;dx -\int_{\Gamma_N} hv \;ds.
\end{align}
In the next result we show that the directional derivatives of this energy functional are characterized by the functional $F$ defined above.
%-----------------------------
\begin{lemma}\label{dtJ-equal-WF} For any $u\in H^1_{\Gamma_D,g}(\Omega)$ and any $v\in H^1_{\Gamma_D,0}(\Omega)$ there holds
    $$ \frac{d}{dt}J(u+tv){\Large|}_{t=0}\;=\; F(u,v).
    $$
\end{lemma}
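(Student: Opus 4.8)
The plan is to compute the directional derivative directly from the definition of $J$ in (\ref{energyfunctional}), expanding $J(u+tv)$ as a polynomial in $t$ and differentiating term by term at $t=0$. Since $J$ is a sum of a quadratic form and two linear functionals, this is a completely elementary computation; the only point requiring a moment's attention is the bookkeeping on the bilinear term.

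First I would write, using the symmetry and bilinearity of $B_{k,c}$ established in (\ref{bilinearform}),
\begin{equation*}
    B_{k,c}(u+tv,u+tv) \;=\; B_{k,c}(u,u) + 2t\,B_{k,c}(u,v) + t^2 B_{k,c}(v,v),
\end{equation*}
so that the quadratic part of $J(u+tv)$ contributes $\frac12\big(B_{k,c}(u,u) + 2t B_{k,c}(u,v) + t^2 B_{k,c}(v,v)\big)$. The two linear terms contribute $-\int_\Omega f(u+tv)\,dx - \int_{\Gamma_N} h(u+tv)\,ds$, which is affine in $t$ with slope $-\int_\Omega fv\,dx - \int_{\Gamma_N} hv\,ds$. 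Differentiating the whole expression with respect to $t$ and setting $t=0$ kills the $t^2$ term and leaves exactly
\begin{equation*}
    \frac{d}{dt}J(u+tv)\Big|_{t=0} \;=\; B_{k,c}(u,v) - \int_\Omega fv\,dx - \int_{\Gamma_N} hv\,ds.
\end{equation*}

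Then I would identify the right-hand side with $F(u,v)$: comparing with the last line of the displayed derivation preceding (\ref{problem-wf}), $F(u,v) = \int_\Omega\big(\langle k\nabla u,\nabla v\rangle + cuv\big)\,dx - \int_\Omega fv\,dx - \int_{\Gamma_N} hv\,ds$, and the first integral is precisely $B_{k,c}(u,v)$ by definition (\ref{bilinearform}). This gives the claimed identity. One should note that $u+tv \in H^1_{\Gamma_D,g}(\Omega)$ for every $t$ since $v|_{\Gamma_D}=0$, so $J(u+tv)$ is well-defined along the whole line and the derivative makes sense; it is also worth remarking that the computation does not require $c>0$, so no ellipticity or inner-product structure is needed here — only that all the integrals are finite, which follows from the standing assumptions $c,k,f,g,h\in L^2(\Omega)$ together with $k$ bounded and $u,v\in H^1(\Omega)$.

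There is essentially no obstacle: the statement is a direct consequence of the algebra of quadratic functionals, and the main (very mild) thing to be careful about is not conflating the symmetric bilinear form's cross term — the factor of $2$ from $B_{k,c}(u,v)+B_{k,c}(v,u)$ cancelling the $\frac12$ in front of $J$ — which is exactly what produces the clean coefficient in the final answer.
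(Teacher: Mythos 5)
Your proposal is correct and follows essentially the same route as the paper's proof: a direct expansion of $J(u+tv)$ as a quadratic polynomial in $t$ followed by differentiation at $t=0$ (the paper expands the integrand explicitly rather than invoking bilinearity of $B_{k,c}$ abstractly, but the computation is identical). Your closing remarks on well-definedness of $u+tv$ in $H^1_{\Gamma_D,g}(\Omega)$ and on not needing $c>0$ are accurate and, if anything, slightly more careful than the original.
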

\begin{proof}
    First note that if $u\in H^1_{\Gamma_D,g}(\Omega)$ and $v\in H^1_{\Gamma_D,0}(\Omega)$ then $u+tv\in H^1_{\Gamma_D,g}(\Omega)$ for all $t\in \mathbb{R}$. Moreover,
    $J(u+vt)$ is clearly differentiable with respect to $t$ and
    \begin{align}
        \frac{d}{dt}J(u+tv)\;&=\; \frac{d}{dt} \left[\frac12 B_{k,c}(u+tv,u+tv)-\int_\Omega (u+tv)\,f\;dx -\int_{\Gamma_N}(u+tv)\,h\;ds \right] \nonumber \\
        &=\frac{d}{dt}\left[\frac12\int_\Omega\left( \langle k\nabla(u+tv),\nabla(u+tv)\rangle +c(u+tv)^2 \right)\; dx -\int_\Omega fu\;dx\right. \nonumber \\
        &\qquad \quad \left.-t\int_\Omega fv\;dx -\int_{\Gamma_N}hu\;ds -t \int_{\Gamma_N}hv\;ds \right] \nonumber \\
        &=\int_\Omega\left( \langle k\nabla u,\nabla v\rangle + kt|\nabla v|^2+ cuv +tv^2 \right)\; dx \nonumber \\ & \qquad -\int_\Omega fv\;dx - \int_{\Gamma_N}hv\;ds.
    \end{align}
    Hence
    \begin{align}
        \frac{d}{dt}J(u+tv){\Large|}_{t=0}\;&=\; \int_\Omega\left( \langle k\nabla u,\nabla v\rangle + cuv \right)\; dx -\int_\Omega fv\;dx - \int_{\Gamma_N}hv\;ds
        \nonumber\\
        &=F(u,v).
    \end{align}
    \hfill
\end{proof}

The next theorem constitutes a fundamental result for the approximation framework that we shall develop later. It relates the solution of the variational problem
$VF(\cal{P})$ with the minimizer of the energy functional $J$.

%--------------------------------------------------
%
\begin{theorem}\label{thm-existence} The variational formulation problem $VF(\cal{P})$ in (\ref{problem-wf}) does have a unique solution
    $u^*$ in $H^1_{\Gamma_D,g}(\Omega)$. Moreover, such a solution is characterized by the unique minimizer of the energy functional $J$ defined in
    (\ref{energyfunctional}), i.e.
    \begin{equation}
        u^*\; = \; \argmin_{u\in H^1_{\Gamma_D,g}(\Omega)}\; J(u).
    \end{equation}
\end{theorem}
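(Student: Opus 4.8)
The plan is to recast $VF(\mathcal{P})$ as a coercive, symmetric variational problem on a Hilbert space, apply the Lax--Milgram theorem (equivalently, the Riesz representation theorem, since $B_{k,c}$ is symmetric) to obtain existence and uniqueness, and then derive the minimization characterization by expanding $J$ around the solution and invoking Lemma \ref{dtJ-equal-WF}.

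First I would homogenize the boundary condition. Fix any $u_0\in H^1_{\Gamma_D,g}(\Omega)$; this set is nonempty by the standing hypotheses ($g$ being the trace on $\Gamma_D$ of an $H^1(\Omega)$ function). Then $H^1_{\Gamma_D,g}(\Omega)=u_0+V$ with $V\doteq H^1_{\Gamma_D,0}(\Omega)$ a closed subspace of $H^1(\Omega)$, hence itself a Hilbert space. Writing $u=u_0+w$, the problem $VF(\mathcal{P})$ in (\ref{problem-wf}) is equivalent to: find $w\in V$ such that $B_{k,c}(w,v)=\ell(v)$ for all $v\in V$, where $\ell(v)\doteq\int_\Omega fv\,dx+\int_{\Gamma_N}hv\,ds-B_{k,c}(u_0,v)$.

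Next I would check the Lax--Milgram hypotheses on $V$. Continuity of $B_{k,c}$ follows from $k\le\gamma_2$, from boundedness of the trace map $H^1(\Omega)\to L^2(\Gamma)$, and — to handle $\int_\Omega cuv\,dx$ with $c$ only in $L^2(\Omega)$ — from the Sobolev embedding $H^1(\Omega)\hookrightarrow L^4(\Omega)$, valid in the dimensions of interest; the same bounds make $\ell$ a bounded linear functional on $V$. For coercivity, $B_{k,c}(v,v)=\int_\Omega\!\big(k|\nabla v|^2+cv^2\big)dx\ge\gamma_1\|\nabla v\|_{L^2(\Omega)}^2$, and since every $v\in V$ vanishes on $\Gamma_D$ (of positive surface measure), the Poincar\'{e}--Friedrichs inequality gives $\|\nabla v\|_{L^2(\Omega)}\ge c_0\|v\|_{H^1(\Omega)}$ for some $c_0>0$, so $B_{k,c}$ is $V$-elliptic (when $c>0$ this is immediate from (\ref{a-norm})). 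Lax--Milgram then yields a unique $w\in V$, and $u^*\doteq u_0+w$ is the unique solution of $VF(\mathcal{P})$, independently of the choice of $u_0$.

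Finally, for the variational characterization, let $u^*$ be this solution and $u=u^*+v$ an arbitrary point of $H^1_{\Gamma_D,g}(\Omega)$, so $v\in V$. Using bilinearity and symmetry of $B_{k,c}$,
\begin{align*}
J(u^*+v)&=J(u^*)+\Big(B_{k,c}(u^*,v)-\int_\Omega fv\,dx-\int_{\Gamma_N}hv\,ds\Big)+\tfrac12 B_{k,c}(v,v)\\
&=J(u^*)+F(u^*,v)+\tfrac12 B_{k,c}(v,v)=J(u^*)+\tfrac12 B_{k,c}(v,v)\ge J(u^*),
\end{align*}
the third equality because $F(u^*,v)=0$ for all $v\in V$, and the inequality strict unless $B_{k,c}(v,v)=0$, i.e. (by coercivity) unless $v=0$; hence $u^*$ is the unique minimizer of $J$. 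Conversely, if $\bar u$ minimizes $J$ over $H^1_{\Gamma_D,g}(\Omega)$, then for each $v\in V$ the scalar function $t\mapsto J(\bar u+tv)$ has a minimum at $t=0$, so Lemma \ref{dtJ-equal-WF} gives $F(\bar u,v)=\frac{d}{dt}J(\bar u+tv)\big|_{t=0}=0$ for all $v\in V$, i.e. $\bar u$ solves $VF(\mathcal{P})$, whence $\bar u=u^*$ by the uniqueness just proved. The only genuinely delicate point is the coercivity step: one must ensure that $\Gamma_D$ carries positive surface measure so that Poincar\'{e}--Friedrichs applies on $V$ when $c$ may vanish, and one must control $\int_\Omega cuv\,dx$ with $c$ only in $L^2(\Omega)$, which imposes a harmless restriction on the space dimension (none in the two-dimensional case treated here); everything else is routine.
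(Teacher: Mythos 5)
Your proof is correct, but it reaches existence by a genuinely different route than the paper. The paper runs the argument in the opposite direction: it first asserts that the energy functional $J$ is strictly convex and coercive on the affine set $H^1_{\Gamma_D,g}(\Omega)$ (this is where it invokes $c>0$), obtains the minimizer $u^*$ by the direct method, and only then uses Lemma \ref{dtJ-equal-WF} to conclude that the minimizer solves $VF(\cal{P})$; uniqueness of the variational solution is then read off from the identity $J(w)-J(u^*)=\tfrac12\|w-u^*\|^2_{B_{k,c}}$. You instead homogenize the boundary condition, work on the closed subspace $V=H^1_{\Gamma_D,0}(\Omega)$, and get existence and uniqueness of the variational solution directly from Lax--Milgram, deriving the minimization characterization afterwards from the same quadratic expansion $J(u^*+v)=J(u^*)+F(u^*,v)+\tfrac12 B_{k,c}(v,v)$ (which is exactly the paper's key identity, approached from the other end). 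What your version buys is precision on the points the paper leaves as ``easy to show'': you make explicit that $H^1_{\Gamma_D,g}(\Omega)$ is a nonempty affine translate of a Hilbert space, that coercivity on $V$ can be obtained from $k\ge\gamma_1$ plus Poincar\'{e}--Friedrichs (so that $\Gamma_D$ having positive surface measure already yields uniqueness even where $c$ vanishes, whereas the paper leans entirely on $c>0$), and that continuity of the term $\int_\Omega cuv\,dx$ with $c$ merely in $L^2(\Omega)$ requires the embedding $H^1(\Omega)\hookrightarrow L^4(\Omega)$, hence a mild dimensional restriction that is vacuous in the two-dimensional setting actually used. The paper's route is shorter and stays entirely within the convex-minimization picture; yours is the standard elliptic-PDE route and is more careful about the hypotheses, at the cost of invoking Lax--Milgram and the trace and embedding theorems explicitly.
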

\begin{proof}
    It is easy to show that the energy functional $J$ in (\ref{energyfunctional}) is strictly convex and coercive over $H^1_{\Gamma_D,g}(\Omega)$
    (for this we need $c$ to be strictly positive), which implies the existence and uniqueness of a global minimizer $u^* \in H^1_{\Gamma_D,g}(\Omega)$.
    Hence, for any $v \in H^1_{\Gamma_D,0}(\Omega)$ there must hold
    \begin{equation*}
        0\;=\; \frac{d}{dt}\, J(u^*+tv){\Large|}_{t=0}\;=\; F(u^*,v),
    \end{equation*}
where the last equality follows from Lemma \ref{dtJ-equal-WF}. Thus $F(u^*,v)=0$ for all $v \in H^1_{\Gamma_D,0}(\Omega)$ which implies that
    \begin{equation*}
        \int_\Omega\left( \langle k\nabla u^*,\nabla v\rangle + cu^*v \right)\; dx =\int_\Omega fv\;dx + \int_{\Gamma_N}hv\;ds  \qquad \forall v \in
        H^1_{\Gamma_D,0}(\Omega),
    \end{equation*}
    and therefore $u^*$ is a solution of problem $VF(\cal{P})$ given in (\ref{problem-wf}).

    Now let us prove that the solution of problem $VF(\cal{P})$ is
    unique. Let $u^*$ be a solution of problem $VF(\cal{P})$ and $w \in
    H^1_{\Gamma_D,g}(\Omega)$ be arbitrary. Then
    \begingroup
    \allowdisplaybreaks
    \begin{align*}
        J(w)&-J(u^*)\;=\;\frac12 B_{k,c}(w,w)-\int_\Omega fw\; dx -\int_{\Gamma_N} hw\; ds - \frac12 B_{k,c}(u^*,u^*) \\ &\qquad +\int_\Omega fu^*\; dx +\int_{\Gamma_N} hu^*\; ds\\
        &=\frac12\left(B_{k,c}(w,w)- B_{k,c}(u^*,u^*) \right) -\left(\int_\Omega f \underset{\in H^1_{\Gamma_D,0}(\Omega)}{\underbrace{(w-u^*)}}\;dx + \int_{\Gamma_N}
        h\underset{\in H^1_{\Gamma_D,0}(\Omega)}{\underbrace{(w-u^*)}}\;ds \right) \\
        &= \frac12\left( B_{k,c}(w,w)- B_{k,c}(u^*,u^*) \right) \\ &\quad -\int_\Omega\left(\langle k\nabla u^*,\nabla(w-u^*)\rangle + cu^*(w-u^*)\right)\;dx
        \quad{\textrm {(since $u^*$ solves $VF(\cal{P})$)}} \\
        &= \frac12\left( B_{k,c}(w,w)- B_{k,c}(u^*,u^*) \right) -B_{k,c}(u^*,w-u^*)\\
        &= \frac12 B_{k,c}(w,w)-\frac12 B_{k,c}(u^*,u^*) -B_{k,c}(u^*,w)+  B_{k,c}(u^*,u^*) \\
        &=\frac12\left( B_{k,c}(w,w) +B_{k,c}(u^*,u^*) -2B_{k,c}(u^*,w)\right)\\
        &=\frac12 B_{k,c}(w-u^*,w-u^*)\\
        &=\frac12 \| w-u^*\|_{B_{k,c}}^2.
    \end{align*}
    \endgroup
    It then follows that $J(w)\ge J(u^*)$ for all  $w \in H^1_{\Gamma_D,g}(\Omega)$ and $J(w)=J(u^*)$ if and only if $w=u^*$.

    Hence problem $VF(\cal{P})$ has a unique solution  $u^* \in H^1_{\Gamma_D,g}(\Omega)$ which is the unique minimizer of the energy functional $J(\cdot)$ over $
    H^1_{\Gamma_D,g}(\Omega)$. \hfill\end{proof}
\begin{rem}
    Note that within this variational formulation, the condition $c>0$ (strictly) is necessary for uniqueness.
\end{rem}
\begin{rem}
    The condition $k(x)\ge \gamma_1 >0$ is strictly necessary for  $J$ to be accretive. The condition $k(x)\le \gamma_2<\infty$ can be replaced by $k(x)$ bounded above
    by a positive function in $L^1(\Omega)$.
\end{rem}

%%%%%%%%%%%%%%%%%%%%%%%%%%%%%%%%%%%%%%%%%%%%%%%%%%%%
\subsection{The inverse problem}

Now, given all the model parameters $\Omega$, $\Gamma_D$, $\Gamma_N$, $c$, $f$, $g$ and $h$, and a prescribed temperature distribution $\hat u\in
H^1_{\Gamma_D,g}(\Omega) $ we consider the problem of finding the corresponding distributed conductivity field $k(\cdot)$ such that $u^*=\hat u$. That is, we intend
to ``invert'' problem ${\cal{P}}={\cal{P}}(k,c,f,g,h)$ with respect to $k$.

For simplicity and clarity in the presentation of the numerical results only the 2D-case will be considered, i.e.\;we will take $n=2$, and we will assume $f=h\equiv
0$. Thus given a prescribed temperature field $\hat u(x,y)\in  H^1_{\Gamma_D,g}(\Omega)$ we want to find $k=k(x,y)$ such that $\hat u$ be the unique solution of
problem ${\cal{P}}(k,c,f,g,h)$. Then, according to Theorem \ref{thm-existence}, $k(x,y)$ must satisfy:

\begin{align}\label{eqnstar1}
    0\;&=\;\frac{d}{dt} J(\hat u +tv){\Large|}_{t=0}\nonumber \\
    &=\int_\Omega\left(\langle k\nabla \hat u,\nabla v \rangle +c \hat u v  \right)\;dx\,dy, \qquad \forall v\in H^1_{\Gamma_D,0}(\Omega).
\end{align}

\section{Implementation}

\subsection{Discretizing the optimality condition and regularization}
Let $\Omega_i$, $1\le i\le L$, be a partition of $\Omega$ by open sets, i.e. such that $\Omega_i\subset \Omega$ is open for all $i$,
$\displaystyle \cup_{i=1}^L\overline{\Omega_{i}}=\overline\Omega$ and $\Omega_i\cap\Omega_j=\emptyset$ if $i\ne j$. For each $i$ let
$(x_i,y_i) \in \Omega_i$ and for any function $q(x,y)$ defined on $\Omega$ let us denote with $q_i$ the value of $q$ at the point  $(x_i,y_i)$, i.e.
$q_i\doteq q(x_i,y_i)$. With this notation, equation (\ref{eqnstar1}) can then be approximated by
\begin{equation}\label{discrete1}
    0 = \sum_{i=1}^L \left[k_i\left(\hat u_{x,i}v_{x,i} + \hat u_{y,i}v_{y,i}\right) + c_i\hat u_i v_i\right] \,m(\Omega_i), \qquad \forall v\in
    H^1_{\Gamma_D,0}(\Omega),
\end{equation}
where the subscripts $x$ and $y$ denote the respective partial derivatives and $m(\Omega_i)$ is the Lebesgue measure of $\Omega_i$. Assuming that the partition is
regular so that $m(\Omega_i)$ is constant, equation (\ref{discrete1}) is equivalent to
\begin{equation}\label{discrete2}
    \sum_{i=1}^L k_i\left(\hat u_{x,i}v_{x,i} + \hat u_{y,i}v_{y,i}\right) = -\sum_{i=1}^L c_i\hat u_i v_i,\qquad \forall v\in H^1_{\Gamma_D,0}(\Omega).
\end{equation}
Consider now a finite, arbitrary set of functions
\begin{equation} \label{testfunctions}
    v^r\in H^1_{\Gamma_D,0}(\Omega),\qquad 1\le r\le R.
\end{equation}
Then (\ref{discrete2}) implies that
\begin{equation}\label{discrete3}
    \sum_{i=1}^L k_i\left(\hat u_{x,i}v^r_{x,i} +\hat
    u_{y,i}v^r_{y,i}\right) = -\sum_{i=1}^L c_i\hat u_i v^r_i,\qquad
    \forall\, 1\le r\le R.
\end{equation}
By defining
\begin{equation}\nonumber
    a_{r\ell}\;\doteq\; \hat u_{x,\ell}\,v^r_{x,\ell} + \hat u_{y,\ell}\,v^r_{y,\ell} \; \textrm{ and } f_r\;\doteq\; -\sum_{i=1}^L c_i\,\hat u_i\, v^r_i,\; 1\le \ell\le L,\;\; 1\le r \le R,
\end{equation}
we can write (\ref{discrete3}) as follows:
\begin{equation}\label{discrete4}
    \sum_{\ell=1}^La_{r\ell} k_\ell\;= f_r,\qquad \forall\, 1\le r\le R.
\end{equation}

Let now $\mathbf{A}$ be an $R\times L$-dimensional matrix, $K\in\mathbb{R}^L$ and $F\in\mathbb{R}^{R}$ whose elements are $a_{r\ell}$, $k_\ell$ and $f_r$, respectively.
Then, equation (\ref{discrete4}) can be simply written in matrix form as:
\begin{equation}\label{matrixform}
    \mathbf{A}\,K=F.
\end{equation}
Note that we still need to impose the condition that all components of the vector $K$ be bounded between the values $\gamma_1$ and $\gamma_2$. In what follows the
idea is to solve (\ref{matrixform}) in the least squares sense, weakly imposing this restriction through a penalizer.

With the above in mind we consider a generalized Tikhonov-Phillips functional of the form:
\begin{equation}\label{TP-functional}
    J_{\alpha,W}(K)\;\doteq\; \|\mathbf{A}K-F\|^2 + \alpha \,W(K),
\end{equation}
where $\alpha>0$ is a regularization parameter and the penalizer $W(K)$ must be appropriately constructed in such a way as to deter non-admissible values as well as
any undesired property of the conductivity profile $k(x,y)$.

\subsection{Assumptions for the numerical implementations}
In what follows we take $\Omega=(0,1)\times(0,1)$ and the Dirichlet and Newmann boundaries are the vertical and horizontal boundaries of $\Omega$, respectively, i.e.
$\Gamma_D=\{0,1\}\times (0,1)$ and $\Gamma_N= (0,1) \times \{0,1\}$ (see Figure \ref{discretization}).

Also, for the numerical experiments that follow, the Dirichlet boundary conditions will be prescribed as $g(x)=T_1$ on $\{0\}\times (0,1)$ and $g(x)=T_2$ on
$\{1\}\times (0,1)$ where $T_1$ and $T_2$ are prescribed constant temperature values, with $T_1>T_2$. Furthermore, we shall assume that $k(x,y)$ can take only two
possible values, say $k_L$ and $k_U$, with $0<k_L<k_U<\infty$. Physically this implies that only two different materials having those conductivities can be present
in the domain $\Omega$. Clearly this assumption is extremely important to design appropriate forms for $W(K)$. It is timely to mention however that all the previous
formulation leading up to equations (\ref{matrixform}) and (\ref{TP-functional}) only requires the assumption $k\in L_{>0}^\infty(\Omega)$.

\begin{figure}[h!]
    \centering
    \includegraphics[width=0.70\textwidth]{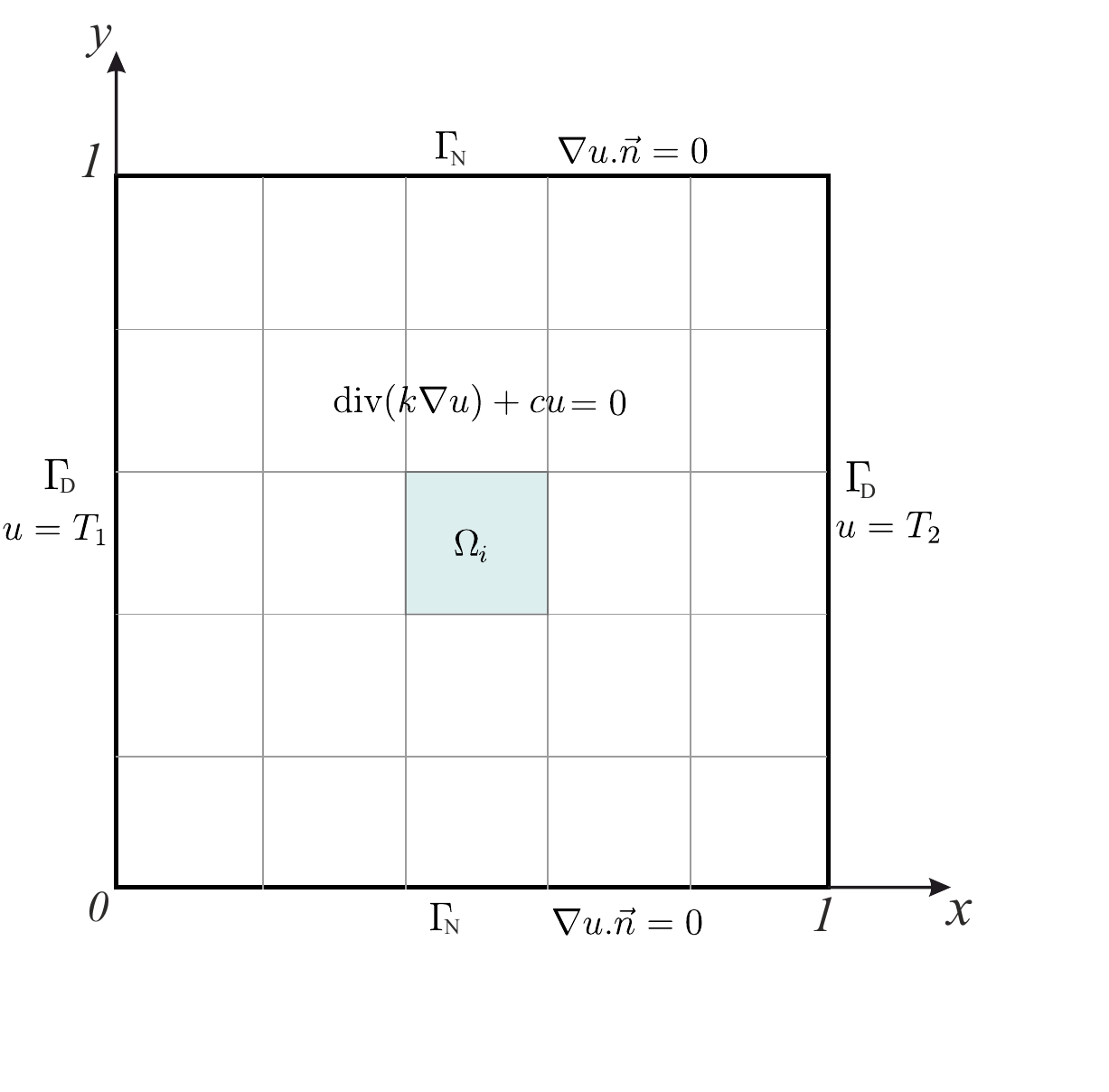}
    \caption{Schematic representation of the problem setting and its discretization}
    \label{discretization}
\end{figure}

\subsection{On the penalizer $\mathbf{W(K)}$}

Under the above assumption that at each point $(x,y)\in\Omega$,  $k(x,y)$ can only take one of the values $k_L$ or $k_U$, the penalizer $W(K)$ must be designed so
that it deters each and every component of the vector $K$ to take any but one of those two values. A simple way to do that is as follows.

Let $p:\mathbb{R}\to \mathbb{R}$ be the monic quadratic polynomial that vanishes at $k_L$ and at $k_U$, i.e. $p(z)=(z-k_L)(z-k_U)\;=\;z^2-(k_L+k_U)z+k_L k_U$. Next
we define $W_1:\mathbb{R}^{L}\to\mathbb{R}^+_0$ as \begin{equation}\label{penalizerW1} W_1(K)\;\doteq\; \|p(K)\|^2_{\mathbb{R}^L},
\end{equation}
where $p$ acting on the vector $K$ must be understood as its component-wise action, i.e. $\left( p(K)\right)_i =p(K_i)$. Clearly, then, $W_1(K)\ge 0$ and $W_1(K)=0$
if and only if each and every component of $K$ takes one of the two values $k_L$ or $k_U$.

A different approach is to design an ad-hoc penalizer which, besides promoting all the components of the vector $K$ to take only the values $k_L$ or $k_U$, provides
data-driven information about where to take one or the other value. With that in mind let $b_U \in \mathbb{R}^L$ be a binary vector whose $i^{th}$ component takes
the value $1$ if and only if the gradient of $\hat u$ at the point $(x_i,y_i)$ is ``large'', i.e. if $ \|\nabla \hat u(x_i,y_i)\|>\gamma$, where $\gamma$ is a
certain prescribed threshold value, and define the penalizer $W_2(K)$ as
\begin{equation}\label{penalizerW2}
    W_2(K)\doteq \|b_U\odot(K-k_L\,\mathbf{1})\|^2,
\end{equation}
where $\odot$ denotes elementwise product and $\mathbf{1}\in\mathbb{R}^L$ is a vector of ones. Hence, at a point where $ \|\nabla \hat u\|$ is large (as dictated by
the threshold parameter $\gamma$), this penalizer will discourage the corresponding component of $K$ to assume any value except $k_L$.

In the definitions (\ref{penalizerW1}) and (\ref{penalizerW2}), the $\mathbb{R}^L$-norm can be replaced by any other norm. Finally, many other options for the
functional $W(K)$ exist if other restrictions on the function $k(x,y)$ are to be enforced, for instance, certain regularity properties (which clearly would not be
appropriate for our case at hand).

\subsection{Examples, numerical experiments and discussions}

\subsubsection{Case I}

We first solved the forward problem $\cal{P}$ under the previous assumptions with $T_1 = 322.0\; [K]$, $T_2= 283.0\; [K]$, $c(x,y)=1.0=$\,constant, and $k(x,y)$ as
shown in Figure \ref{kxy-design1}. For this we used a standard discretization by finite element method, using biquadratic interpolation elements S2 with 8-nodes for
computing $\hat u(x,y)$, $\hat u_x(x,y)$ and $\hat u_y(x,y)$. The same scheme was used to solve all forward problems in the coming cases. The resulting temperature
distribution $\hat u (x,y)$, whose discretized values are then used as inputs for the inverse problem, is shown in Figure \ref{uhat1}\,b).

%\textcolor{red}{The input data (forward problem) is obtained by solving the thermal problem of heat conduction. To solve the heat conduction problem, a standard
%discretization by finite element method, using biquadratic interpolation elements S2 with 8-node. In this way, the fields u, ux and uy are obtained to be used in
%solving the inverse problem.}
%
\begin{figure}[h!]
    \centering
    \includegraphics[width=1
    \textwidth]{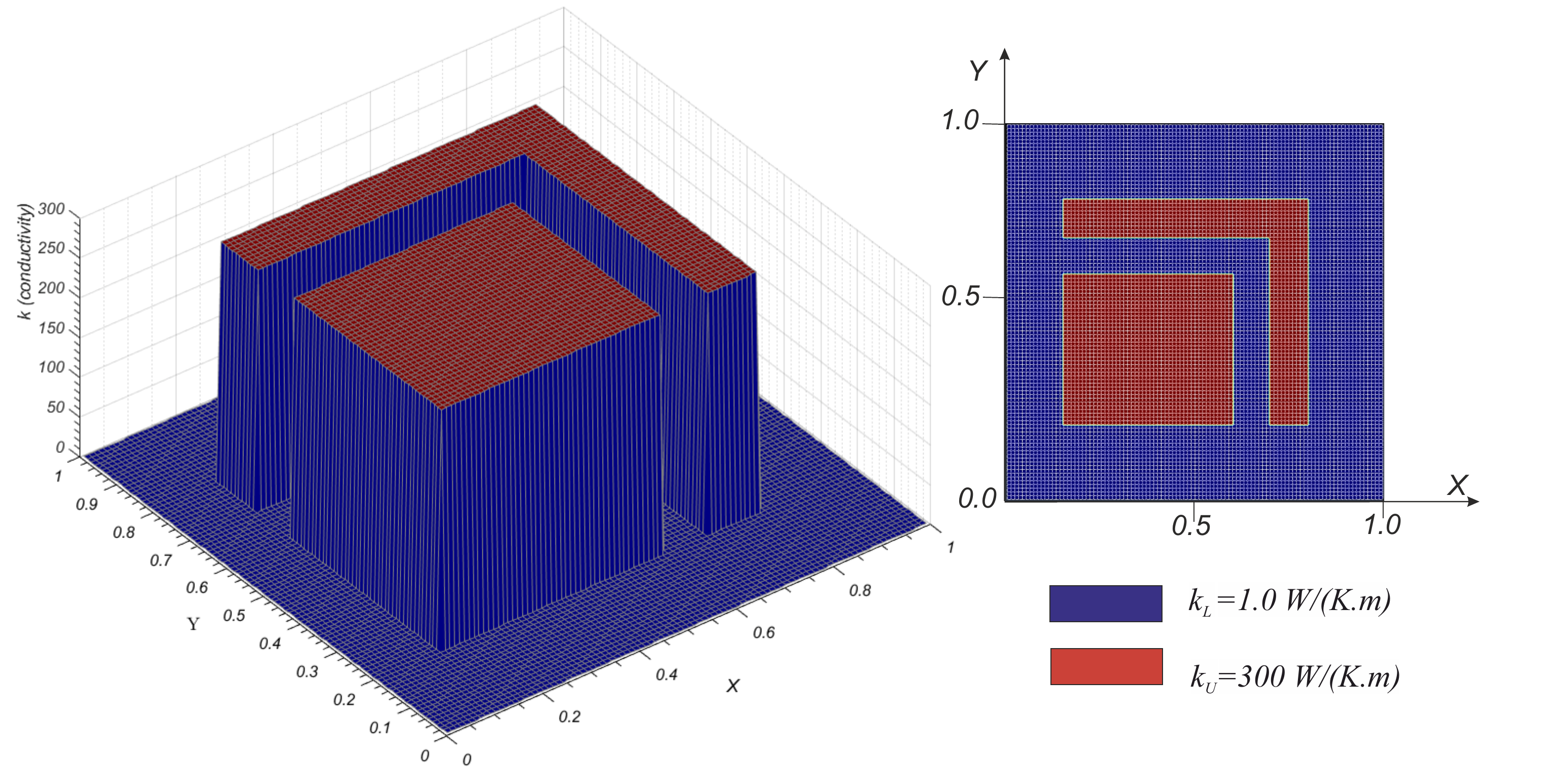} \\ \vspace{-.05in} \caption{Conductivity $k(x,y)$ used for solving the forward problem in Case I} \label{kxy-design1}
\end{figure}
\begin{figure}[h!]
    \centering
    \includegraphics[width=1\textwidth]{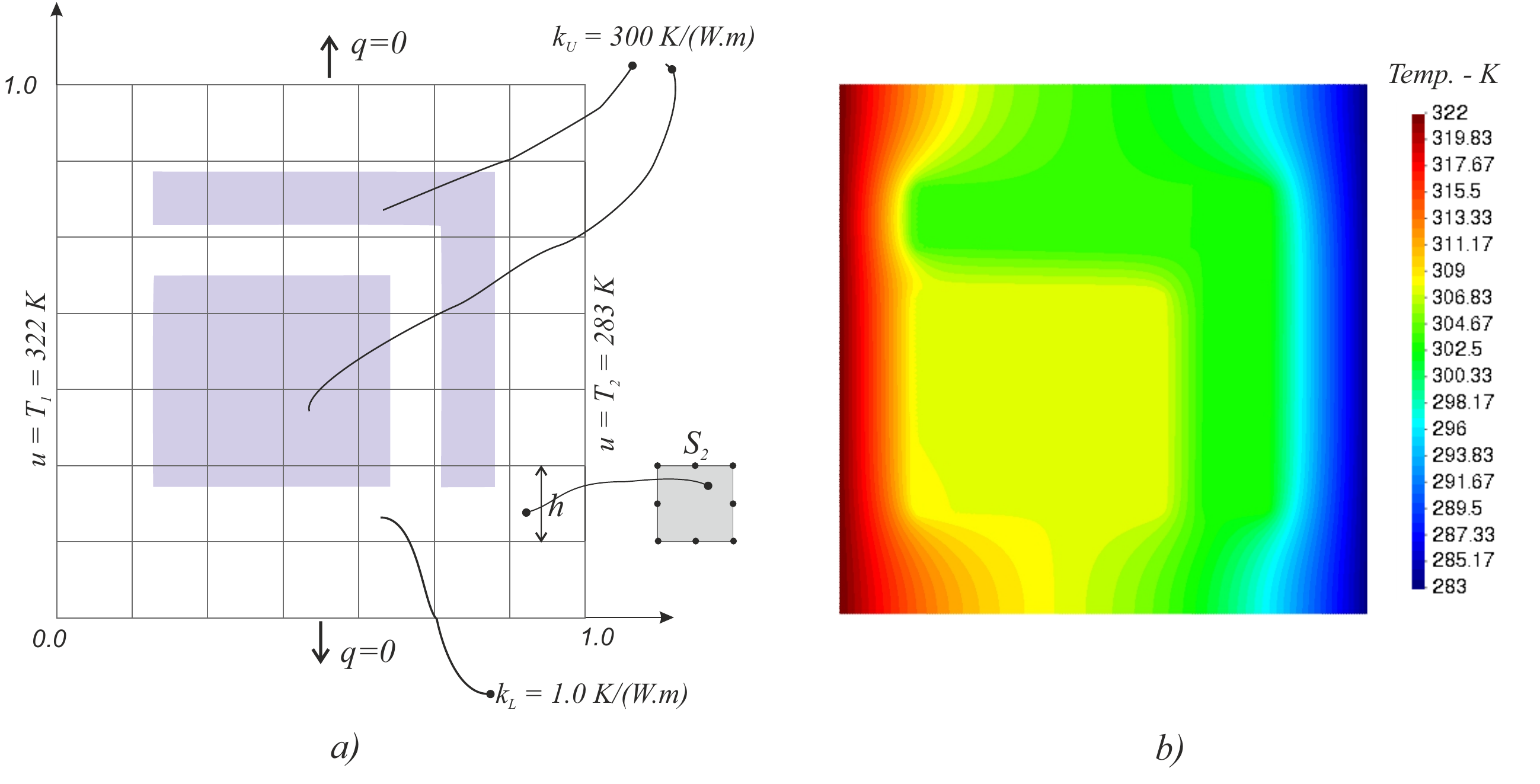} \\
    \caption{{ \textit{a)} Sketch of the distretized domain used to solve the forward problem, for case I. The finite element mesh S2 used is regular with elements size $h= 1/200$.} \textit{b)} Temperature distribution $\hat u(x,y)$ for $k(x,y)$ as in Figure \ref{kxy-design1}.}
    \label{uhat1}
\end{figure}
\bigskip
{\large \bf Setting 1:} For this case we picked $\alpha=0$ (non-penalized case) in (\ref{TP-functional}) and the set of
functions $v^r\in H^1_{\Gamma_D,0}(\Omega),\; 1\le r\le R$, was chosen as an appropriate single index reordering of the set of functions:
\begin{equation}\label{basisfunctions1}
    v^{m,n}(x,y)\;\doteq\; x^m(1-x)^n, \qquad \textrm{for } 1\le m,n\le M,
\end{equation}
where $M\in \mathbb{N}$ is appropriately chosen. Note that $v^{m,n}\in H^1_{\Gamma_D,0}(\Omega)$ as required in (\ref{eqnstar1}). Thus $R=M^2$ and $v^r_{i}\doteq
v^{m,n}(x_i,y_i)=x_{i}^m(1-x_{i})^n$ for all $1\le i\le L$, $1\le r\le R$ and $1\le m,n\le M$. The general idea behind our approach is to somehow determine ``how
much'' information about $k(x,y)$ can be obtained from the set of functions (\ref{basisfunctions1}) as a subset of $H^1_{\Gamma_D,0}(\Omega)$ through the optimality
equation (\ref{eqnstar1}) which characterizes $k(x,y)$ or, more precisely, through its discrete counterpart (\ref{discrete2}).

We first solved (\ref{TP-functional}) with $\alpha=0$. Hence, no restrictions on $k(x,y)$ were imposed. Also, we used $M=5$ in (\ref{basisfunctions1}) which results
in $R=25$ test functions in (\ref{testfunctions}). We computed the best approximate solution (i.e. the least squares solution of minimal norm) of $AK=F$ as
$K^\dagger =A^\dagger F$ where $A^\dagger$ denotes the Moore-Penrose generalized inverse of $A$ (\cite{refb:Engl-Hanke-96}). The obtained (discretized) $k(x,y)$ is
plotted in Figure \ref{obtained-kforexample1}. We observe how the basic geometric distribution of the two possible values of $k(x,y)$ is already recovered using only
the 25 functions $v^{m,n}(x,y)$ in (\ref{basisfunctions1}) with $M=5$, even without using any penalization in equation  (\ref{TP-functional}). It is timely to point
out here that the severe ill-posedness of the problem is clearly reflected on the matrix $A$ whose condition number turns out to be approximately equal to $2.5\times
10^{18}$ (here we used $L=10^4$ resulting from a $100\times 100$ regular grid on $\Omega$. It is reasonable to think that a better restoration of $k(x,y)$ could be
obtained by taking $\alpha>0$ and minimizing the functional $J_{\alpha,W}$ given in (\ref{TP-functional}) using the penalizer $W_1(K)$ defined in
(\ref{penalizerW1}).
\begin{figure}[H]
    \centering
    \includegraphics[width=1\textwidth]{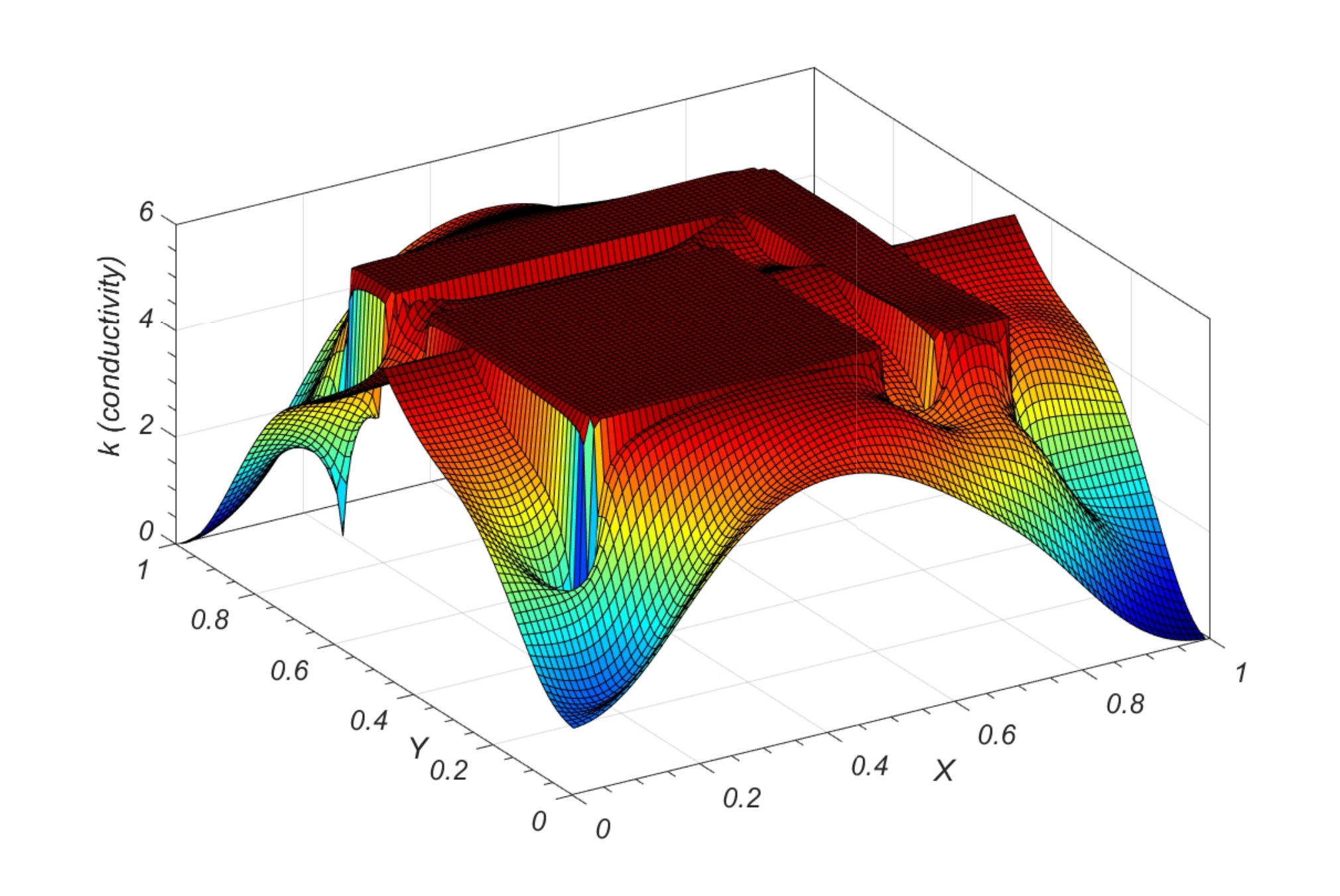} \\ \vspace{-.05in}
    \caption{Reconstruction of $k(x,y)$ obtained using $\hat u(x,y)$ as in Figure \ref{uhat1}, using a non-penalized least squares approach.}
    \label{obtained-kforexample1}
\end{figure}

\medskip
{\large \bf Setting 2:}  For this example all parameters and functions are the same as in Setting 1, except that here we used $\alpha>0$ in (\ref{TP-functional}) and
the penalizer was chosen as $W(K)=W_1(K)$ given by (\ref{penalizerW1}). The appropriate value of  $\alpha>0$ was picked by means of the L-curve method
(\cite{ref:Hansen-Oleary-93}, \cite{refb:Hansen2010}, \cite{refb:Engl-Hanke-96}). Note that finding the minimizer of (\ref{TP-functional}) corresponds to finding the
Tikhonov-Phillips $W_1$-regularized solution of (\ref{matrixform}).
%was chosen by means of the L-curve method (\cite{ref:Hansen-Oleary-93}, \cite{refb:Hansen2010}, \cite{refb:Engl-Hanke-96}). This corresponds to finding the
%Tikhonov-Phillips regularized solution of (\ref{matrixform}), given as the minimizer of problem (\ref{TP-functional}), with the penalizer $W(K)=W_1(K)$ given by
%(\ref{penalizerW1}).
The obtained conductivity $k(x,y)$ is shown in Figure \ref{obtained-kforexample2}. As it can be seen, the introduction of the penalizer $W_1$, as expected, induces
$k$ to take only values around $K_L=1.0$ and $K_U=300$. Although the correct geometric profile of $k(x,y)$, shown in Figure \ref{kxy-design1}, can be clearly
appreciated in this figure, the obtained result is far from being satisfactory. It is highly desirable to come up with a better penalizer that be able to reconstruct
the conductivity distribution as close as possible.

\begin{figure}[H]
    \centering
    \includegraphics[width=1\textwidth]{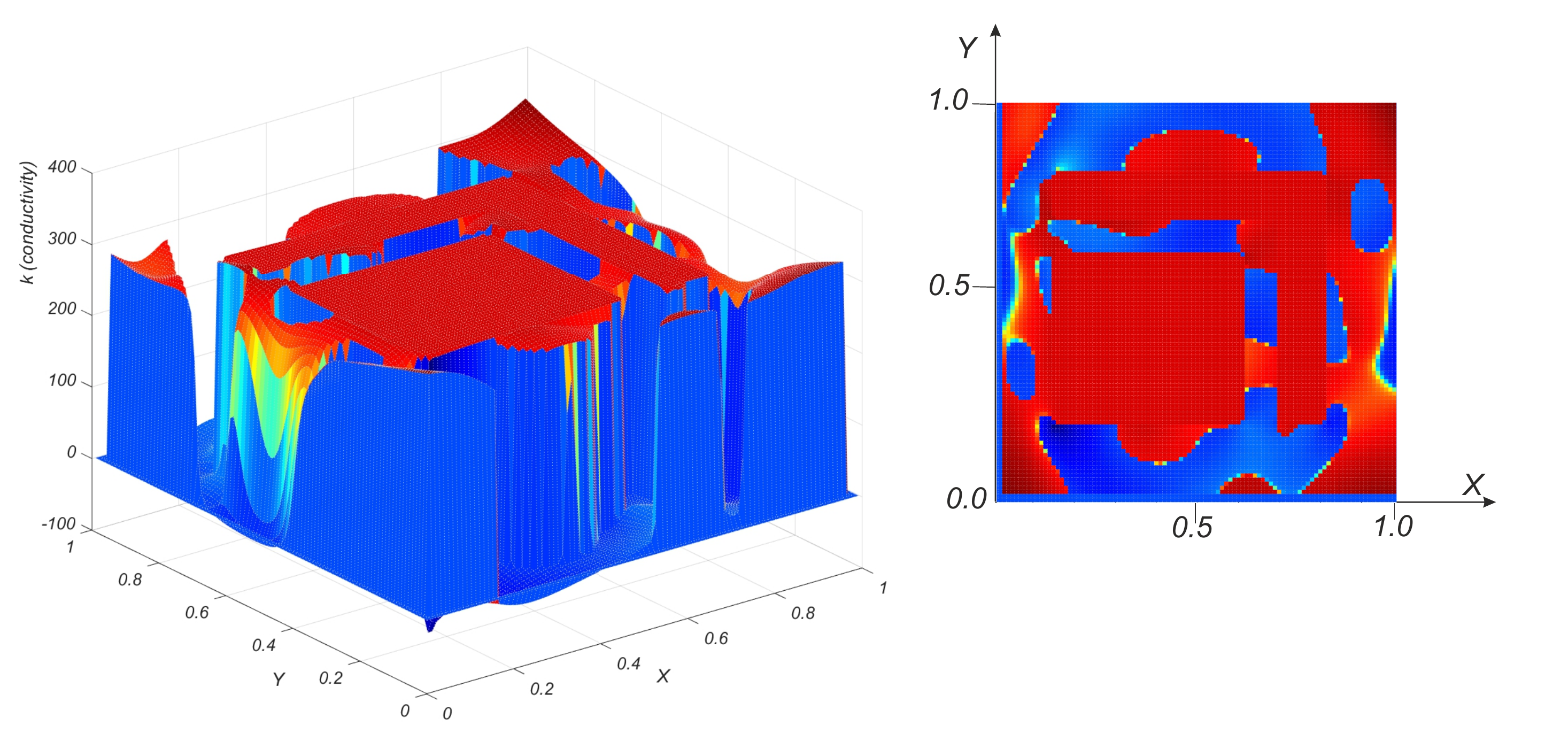} \\
    \caption{Reconstruction of $k(x,y)$ obtained using $\hat u(x,y)$ as in Figure \ref{uhat1}, via the penalized least squares approach (\ref{TP-functional}) with
        $W(K)=W_1(K)$ given as in (\ref{penalizerW1}).} \label{obtained-kforexample2}
\end{figure}

\medskip
{\large \bf Setting 3:} Here we proceed to use the penalizer $W_2(K)$ which, unlike $W_1$, includes data-driven information about the local size of $\|\nabla\hat
u(x,y)\|$. Hence, $k(x,y)$ is now obtained as the Tikhonov-Phillips $W_2$-regularized solution of problem (\ref{matrixform}), that is, as the global minimizer of
(\ref{TP-functional}) when the penalizer $W(K)$ is given by $W_2(K)$ as defined in (\ref{penalizerW2}). The value of the threshold parameter $\gamma$ needed to
define $b_U$ in (\ref{penalizerW2}) was set to $0.01*M$, where $M\doteq \max_{1\le i\le L} \|\hat u(x_i,y_i)\|$ while the regularization parameter $\alpha$ was again
chosen by means of the L-curve method. The restored conductivity profile $k(x,y)$ is shown in Figure \ref{obtained-kforexample3}. As it can be observed, although
some small artifacts still appear, a much better approximation of the true profile is obtained.
\begin{figure}[H]
    \centering
    \includegraphics[width=1\textwidth]{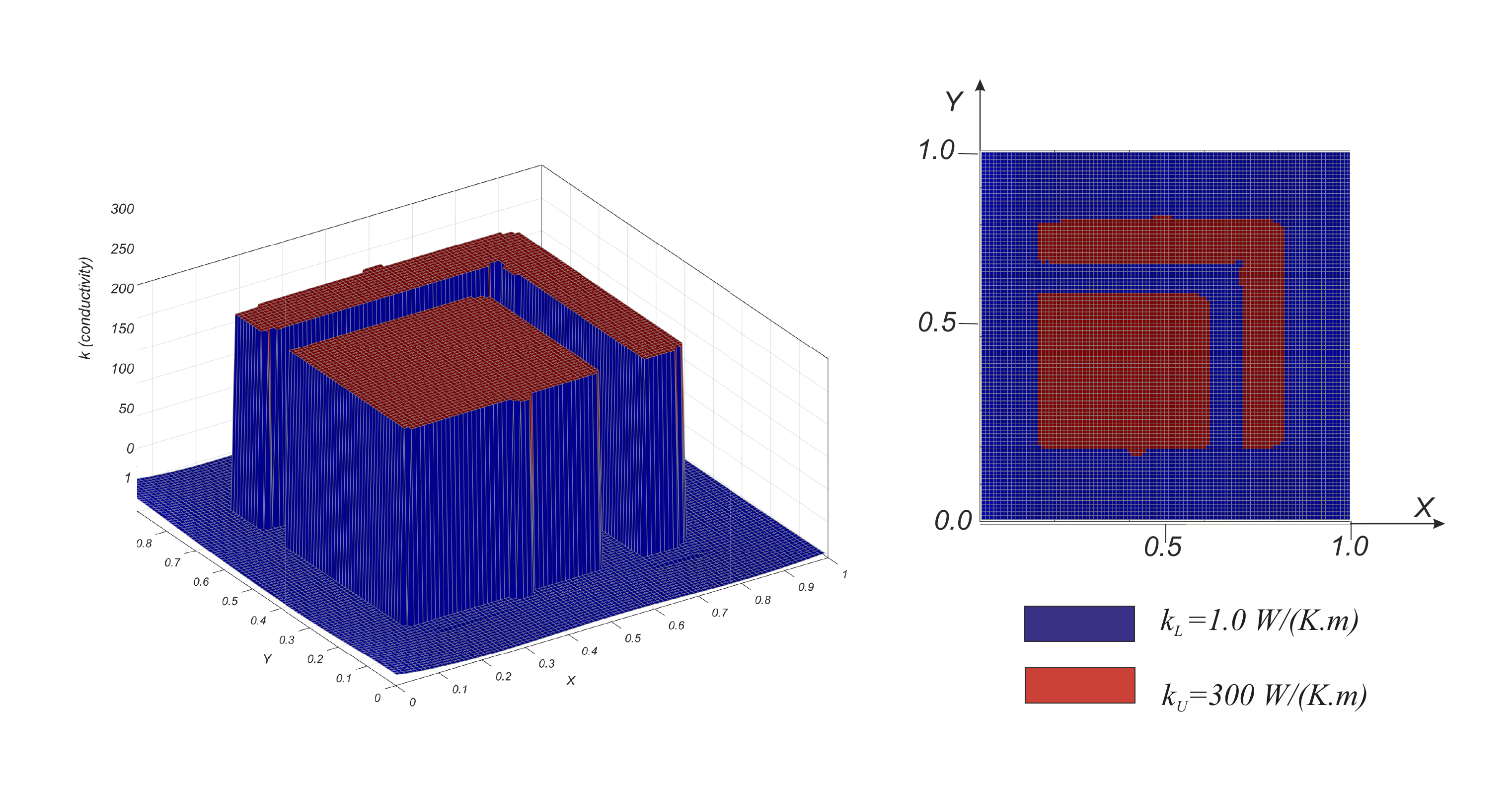} \\
    \caption{Reconstruction of $k(x,y)$ obtained using $\hat u(x,y)$ as in Figure \ref{uhat1}, using a penalized least squares approach with $W(K)=W_2(K)$ defined as in
        (\ref{penalizerW2})} \label{obtained-kforexample3}
\end{figure}

We now proceed to present the results obtained for three other different configurations of the conductivity profile $k(x,y)$. For the sake of brevity, we shall only
show the results obtained under Setting 3 of Case I, which is clearly by far the best approach for solving the inverse problem at hand.

%%%%%%%%%%%%%%%%%%%%%%%%%%%%%%%%%%%%%%%%%%%%%%%%%%%%%%%%
\bigskip
\subsubsection{Case II} For this case the conductivity profile is as depicted in Figure \ref{kxy-design2}. Here again we computed $\hat
u(x,y)$ by first running the forward problem with $T_1 = 318.15\; [K]$, $T_2= 288.15\; [K]$, $c(x,y)=1.0=$\,constant. The resulting temperature distribution $\hat
u(x,y)$, whose discretized values are used as inputs for the inverse problem, is shown in Figure \ref{uhat2}\,b).

\begin{figure}[h!]
    \centering
    \includegraphics[width=1
    \textwidth]{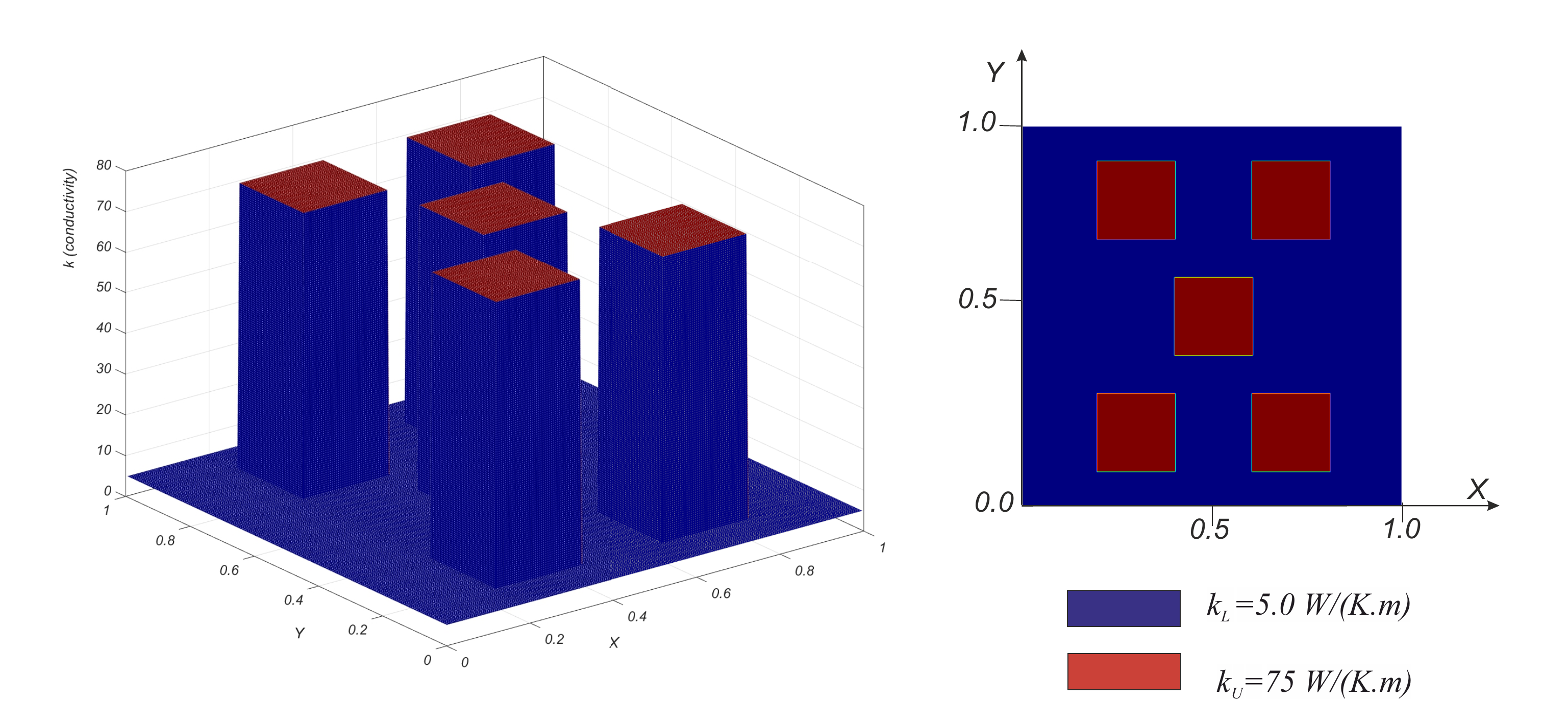} \\  \caption{Distributed values of the conductivity $k(x,y)$ used for solving the forward problem in Case II} \label{kxy-design2}
\end{figure}
\begin{figure}[h!]
    \centering
    \includegraphics[width=1\textwidth]{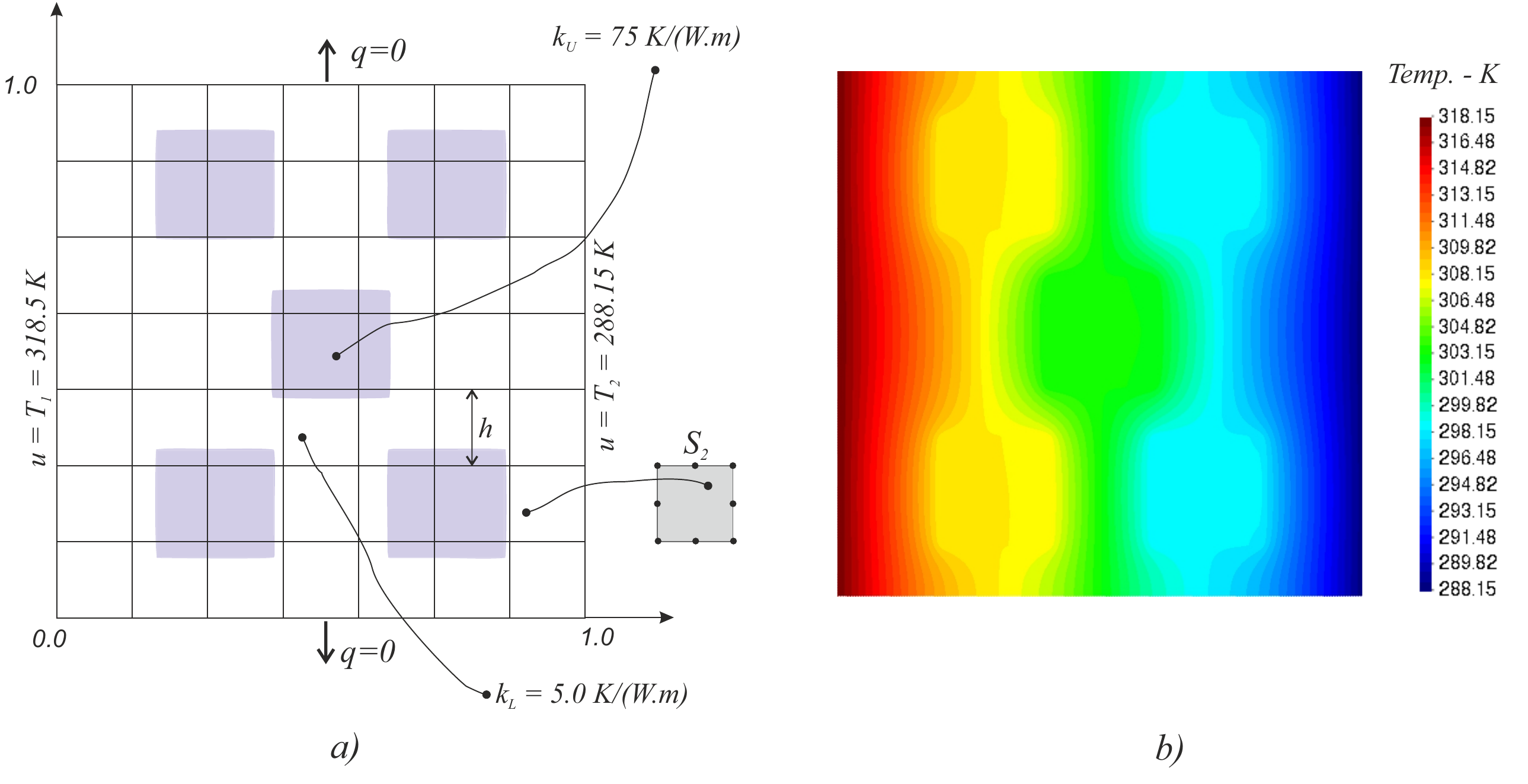} \\
    \caption{{ \textit{a)} Sketch of the distretized domain used to solve the forward problem for Case II. The finite element mesh S2 used is regular with elements size $h= 1/200$.} \textit{b)} Temperature distribution $\hat u(x,y)$ for $k(x,y)$ as in Figure \ref{kxy-design2}, Case II.} \label{uhat2}
\end{figure}
\bigskip Here, the set of functions $v^r\in H^1_{\Gamma_D,0}(\Omega),\; 1\le r\le R$, was
chosen as in Setting 1 of Case I. We proceeded to compute the generalized Tikhonov-Phillips solution of problem (\ref{matrixform}) when the penalizer is given by
$W_2(K)$ as defined in (\ref{penalizerW2}). The threshold parameter was now chosen as $\gamma=0.0125\,M$, where, as before, $M\doteq \max_{1\le i\le L} \|\hat
u(x_i,y_i)\|$ while the regularization parameter $\alpha$ was again chosen by means of the L-curve method. The restored conductivity profile $k(x,y)$ is shown in
Figure \ref{obtained-k-forcaseII-3}. Here again, despite a few artifacts, we can observe how our method is able to satisfactory reconstruct the conductivity
distribution profile.

\begin{figure}[h!]
    \centering
    \includegraphics[width=1\textwidth]{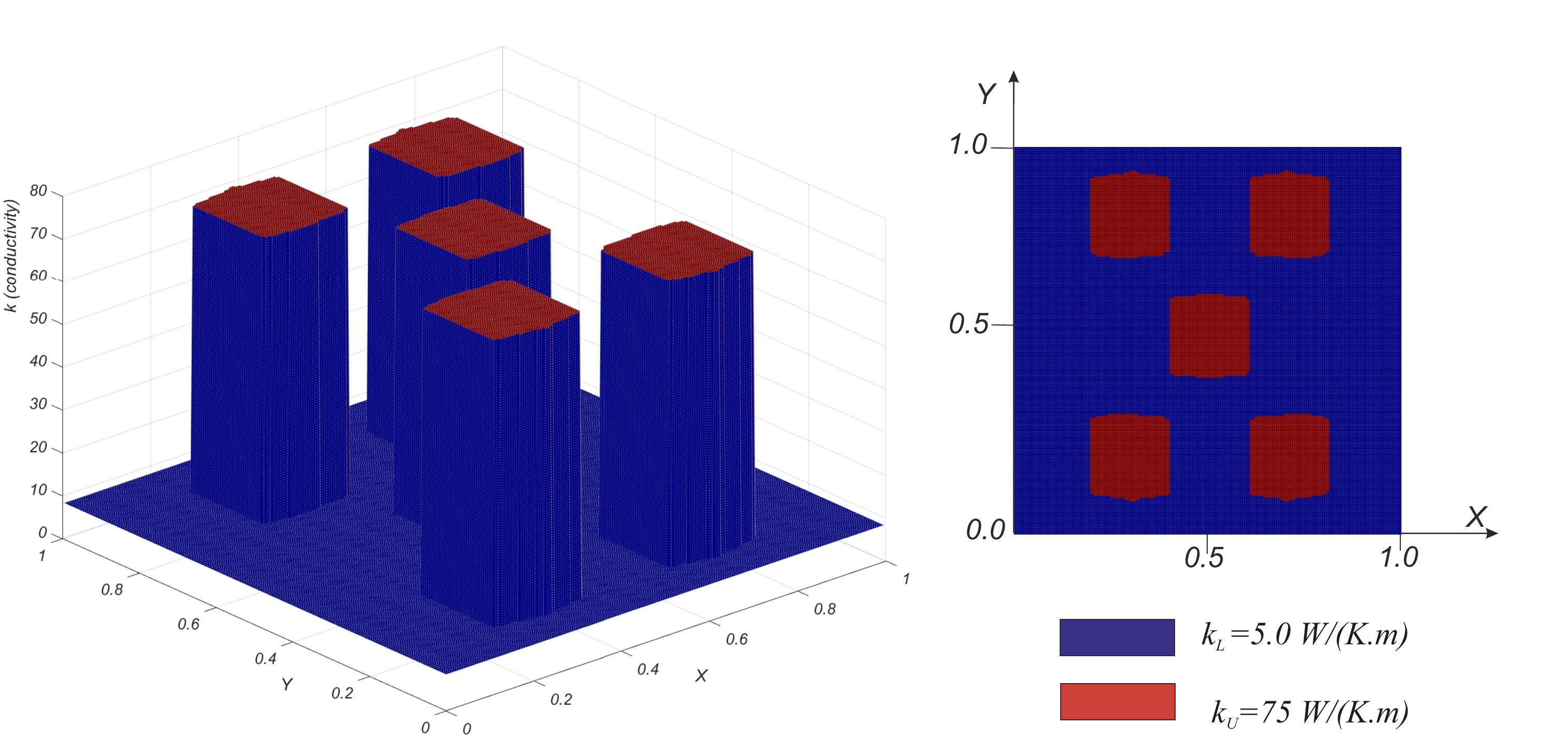} \\
    \caption{Reconstruction of $k(x,y)$ obtained using $\hat u(x,y)$ as in Figure \ref{uhat2}, using a penalized least squares approach with $W_2(K)$ defined as in
        (\ref{penalizerW2}), Case II.} \label{obtained-k-forcaseII-3}
\end{figure}

%%%%%%%%%%%%%%%%%%%%%%%%%%%%%%%%%%%%%%%%%%%%%%%%%%%%%%%%%%%%%%%%%%%%%%%%
\bigskip
\subsubsection{Case III} For this case the conductivity profile is as
depicted in Figure \ref{kxy-design3}. Here again we computed $\hat u(x,y)$ by first running the forward problem with $T_1 = 373.15\; [K]$, $T_2= 353.15\; [K]$,
$c(x,y)=1.0=$\,constant, $k_U=100$ and $k_L=0.7$. The resulting temperature distribution $\hat u(x,y)$ is shown in Figure \ref{uhat3}\,b).

\begin{figure}[h!]
    \centering
    \includegraphics[width=1
    \textwidth]{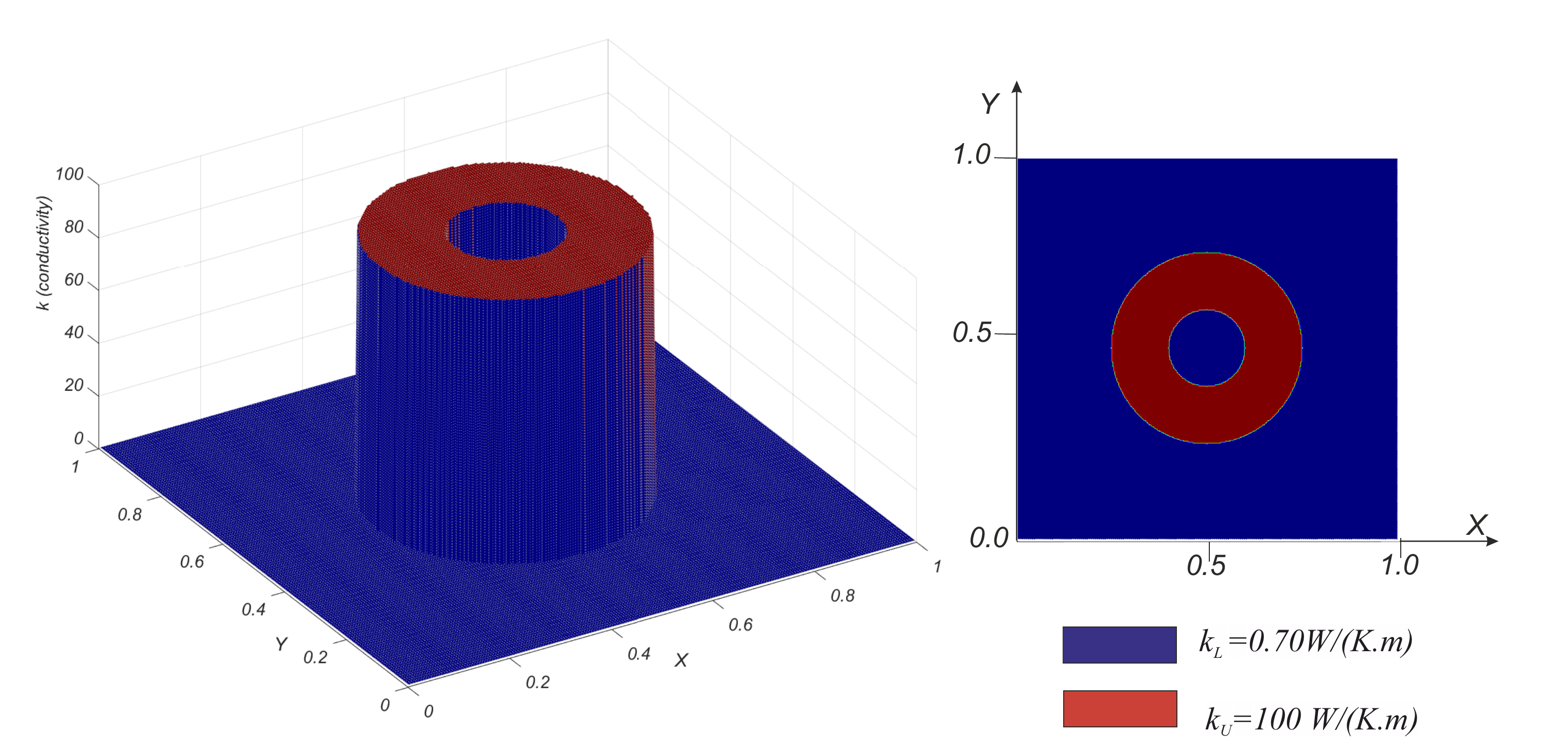} \\  \caption{Distributed values of the conductivity $k(x,y)$ used for solving the forward problem in Case III} \label{kxy-design3}
\end{figure}
\begin{figure}[h!]
    \centering
    \includegraphics[width=1\textwidth]{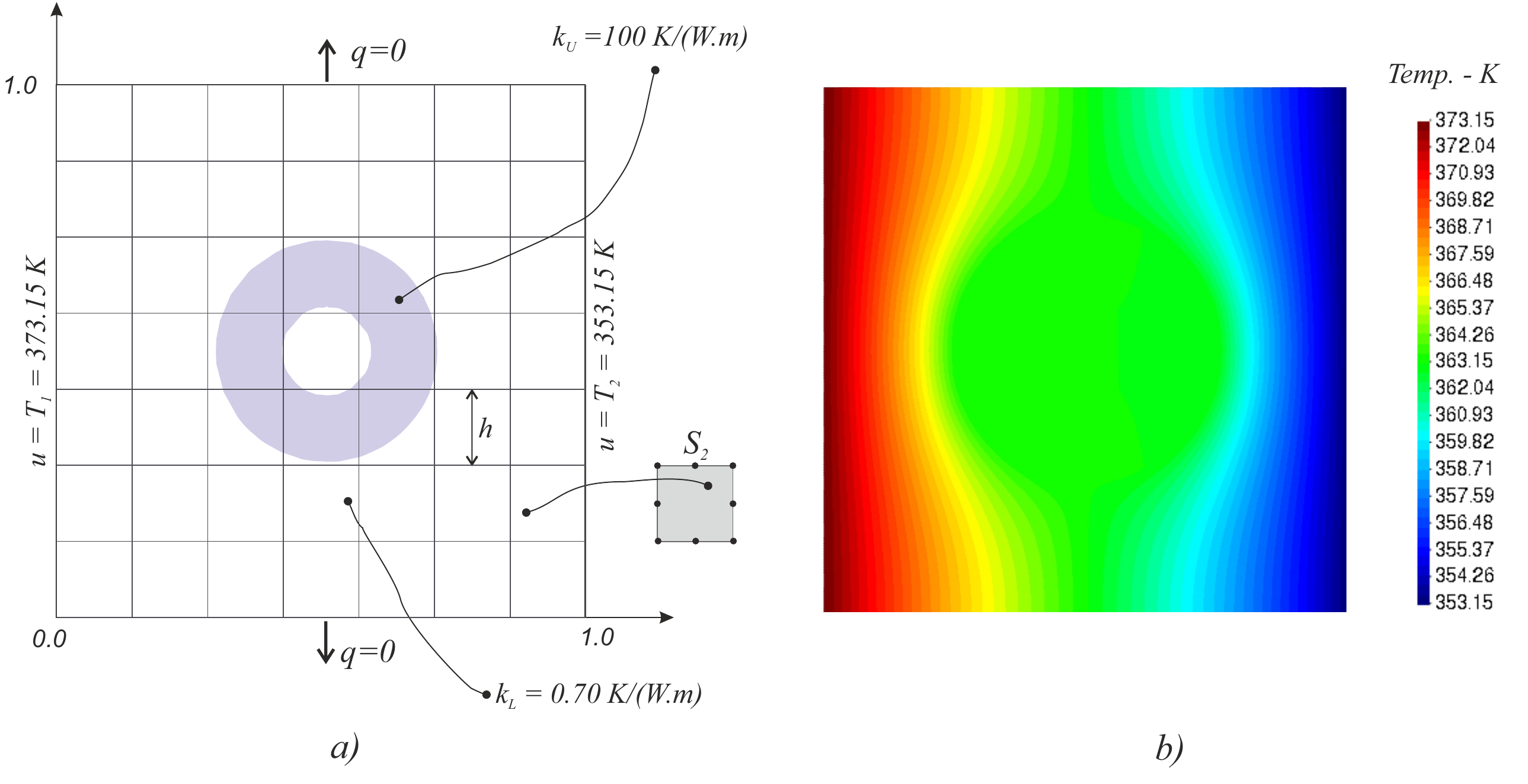} \\
    \caption{{ \textit{a)} Sketch of the discretized domain used to solve the forward problem, for case III. The finite element mesh S2 used is regular with elements size $h= 1/200$.}
    \textit{b)}  Temperature distribution $\hat u(x,y)$ for $k(x,y)$ as in Figure \ref{kxy-design3}, Case III.} \label{uhat3}
\end{figure}

\bigskip
As in the previous case, the set of functions $v^r\in H^1_{\Gamma_D,0}(\Omega),\; 1\le r\le R$, was
chosen as in Setting 1 of Case I. We proceeded to compute the generalized Tikhonov-Phillips solution of problem (\ref{matrixform}) when the penalizer is given by
$W_2(K)$ as defined in (\ref{penalizerW2}). The value of the threshold was now chosen as $\gamma=0.0125\,M$, where, as before, $M\doteq \max_{1\le i\le L} \|\hat
u(x_i,y_i)\|$ while the regularization parameter was again chosen by means of the L-curve method. The restored conductivity profile $k(x,y)$ is shown in Figure
\ref{obtained-k-forcaseIII-3}. Once again we can observe how our method is able to satisfactory reconstruct the conductivity distribution profile.

\begin{figure}[h!]
    \centering
    \includegraphics[width=0.95\textwidth]{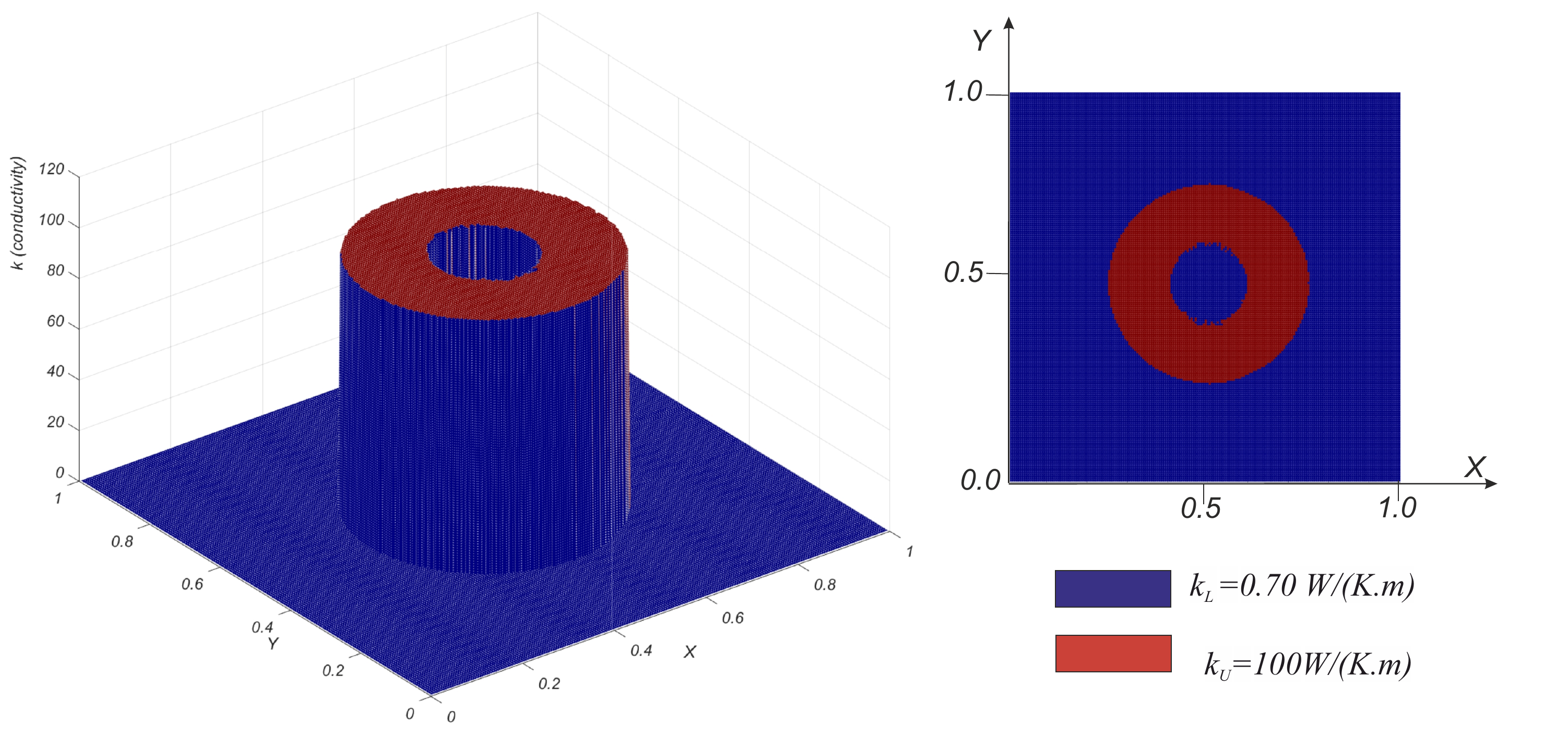} \\ \vspace{-.05in}
    \caption{Reconstruction of $k(x,y)$ obtained using $\hat u(x,y)$ as in Figure \ref{uhat3}\,b), using a penalized least squares approach with $W_2(K)$ defined as in
        (\ref{penalizerW2}), Case III.} \label{obtained-k-forcaseIII-3}
\end{figure}

%%%%%%%%%%%%%%%%%%%%%%%%%%%%%%%%%%%%%%%%%%%%%%%%%%%%%%%%%%%%%%%%%%%%%%%%%%
%%%%%%%%%%%%%%%%%%%%%%%%%%%%%%%%%%%%%%%%%%%%%%%%%%%%%%%%%%%%%%%%%%%%%%%%
\bigskip
\subsubsection{Case IV} We end up presenting a case where the conductivity profile has a very irregular shape, as depicted in Figure \ref{kxy-design4}. The goal is
to verify if the methods is able to detect these irregular boundaries in $k(x,y)$. Here again we computed $\hat u(x,y)$ by first running the forward problem with
$T_1 = 308.15\; [K]$, $T_2= 298.15\; [K]$, $c(x,y)=1.0=$\,constant, $k_U=125$ and $k_L=20$. The resulting temperature distribution $\hat u(x,y)$ is shown in Figure
\ref{uhat_4}\,b).

\begin{figure}[h!]
    \centering
    \includegraphics[width=1
    \textwidth]{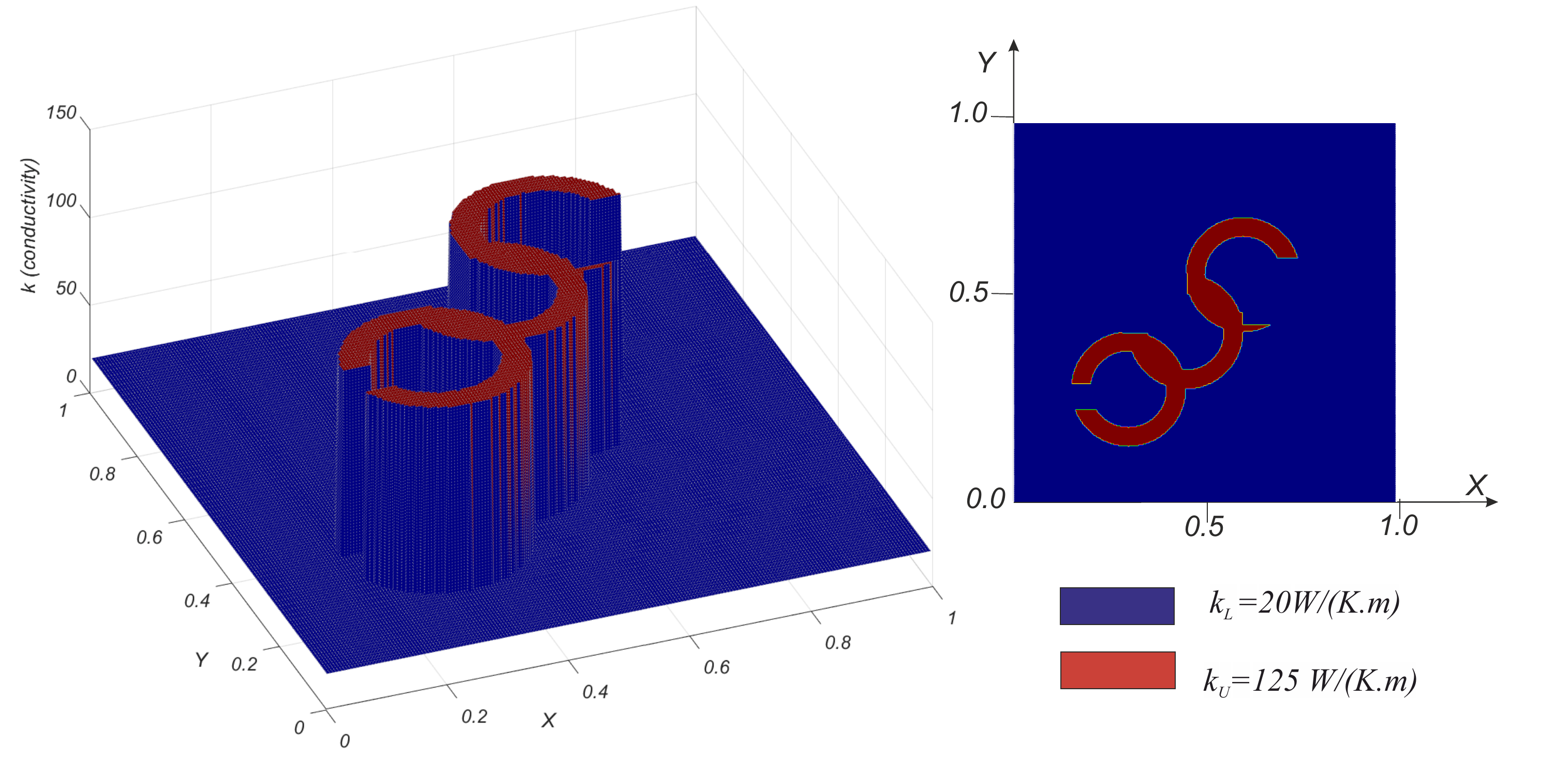} \\ \vspace{-.1in} \caption{Distributed values of the conductivity $k(x,y)$ used for solving the forward problem in Case IV.} \label{kxy-design4}
\end{figure}
%
%\begin{figure}[h!]
%    \centering
%    \includegraphics[width=1\textwidth]{uhat3.pdf} \\
%    \caption{ { \textit{a)} Sketch of the discretized domain used to solve the forward problem for case III.
%    The finite element mesh S2 used is regular with elements size $h= 1/200$.} \textit{b)} Temperature distribution $\hat u(x,y)$ for $k(x,y)$ as
%    in Figure \ref{kxy-design3}, Case III.} \label{uhat4}
%\end{figure}
%
%
As in the previous cases, we proceeded to compute the generalized Tikhonov-Phillips solution of problem (\ref{matrixform}), as the global minimizer of (\ref{TP-functional})
when the penalizer is given by $W_2(K)$ as defined in (\ref{penalizerW2}). The value of the threshold in this case was chosen as $\gamma=0.3153\,M$, with $M\doteq
\max_{1\le i\le L} \|\hat u(x_i,y_i)\|$ while the regularization parameter was again chosen by means of the L-curve method. The restored conductivity profile
$k(x,y)$ is shown in Figure \ref{obtained-k-forcaseIV-3}. Once again, we can observe how our method is able to quite satisfactory reconstruct the conductivity
distribution profile.

\begin{figure}[h!]
    \centering
    \includegraphics[width=1\textwidth]{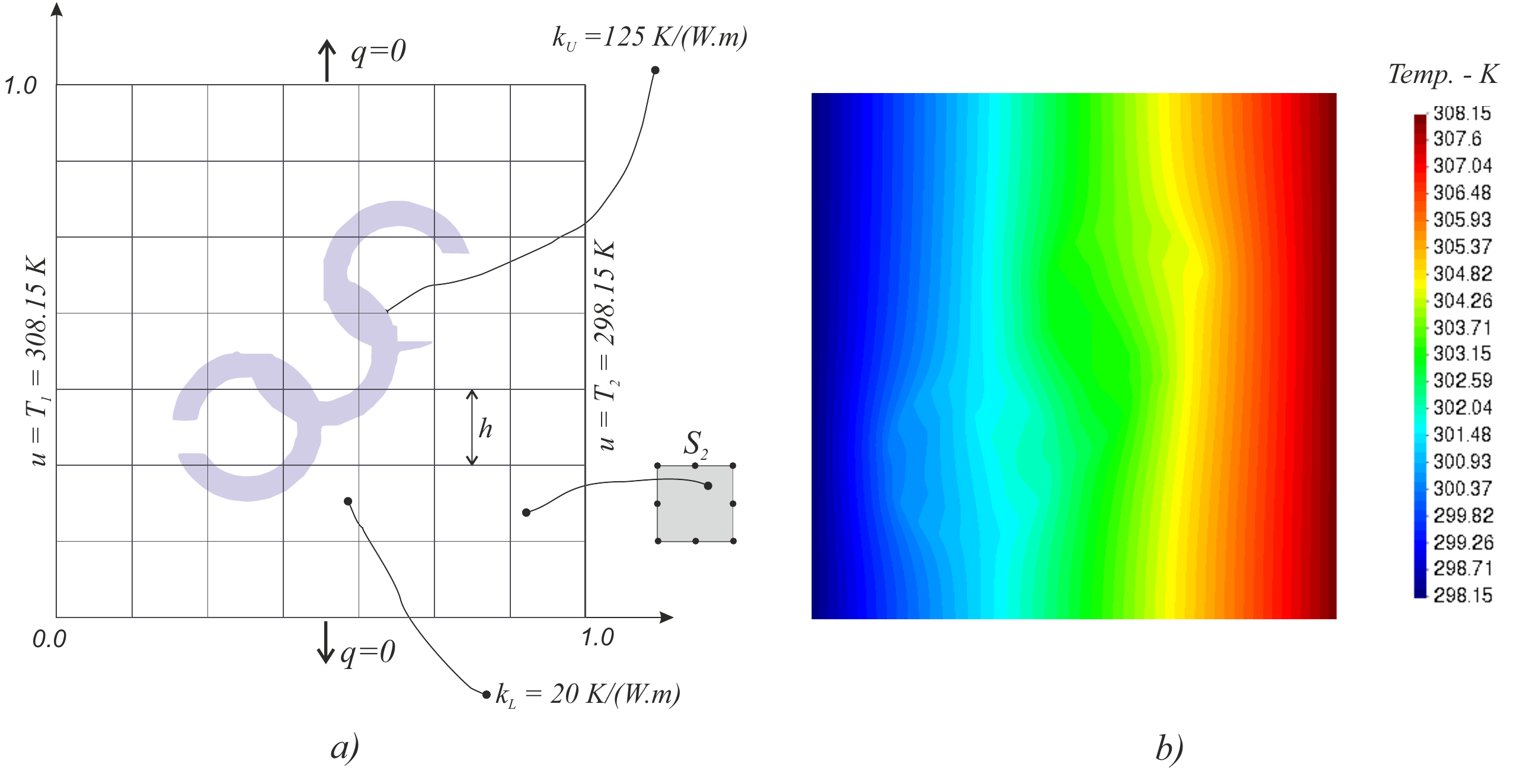} \\
    \caption{ { \textit{a)} Sketch of the discretized  domain used to solve the forward problem, for case IV. The finite element mesh S2 used is regular with elements size $h= 1/200$.}
    \textit{b)} Temperature distribution $\hat u(x,y)$ for $k(x,y)$ as in Figure \ref{kxy-design4}, Case IV.} \label{uhat_4}
\end{figure}

\begin{figure}[h!]
    \centering
    \includegraphics[width=1\textwidth]{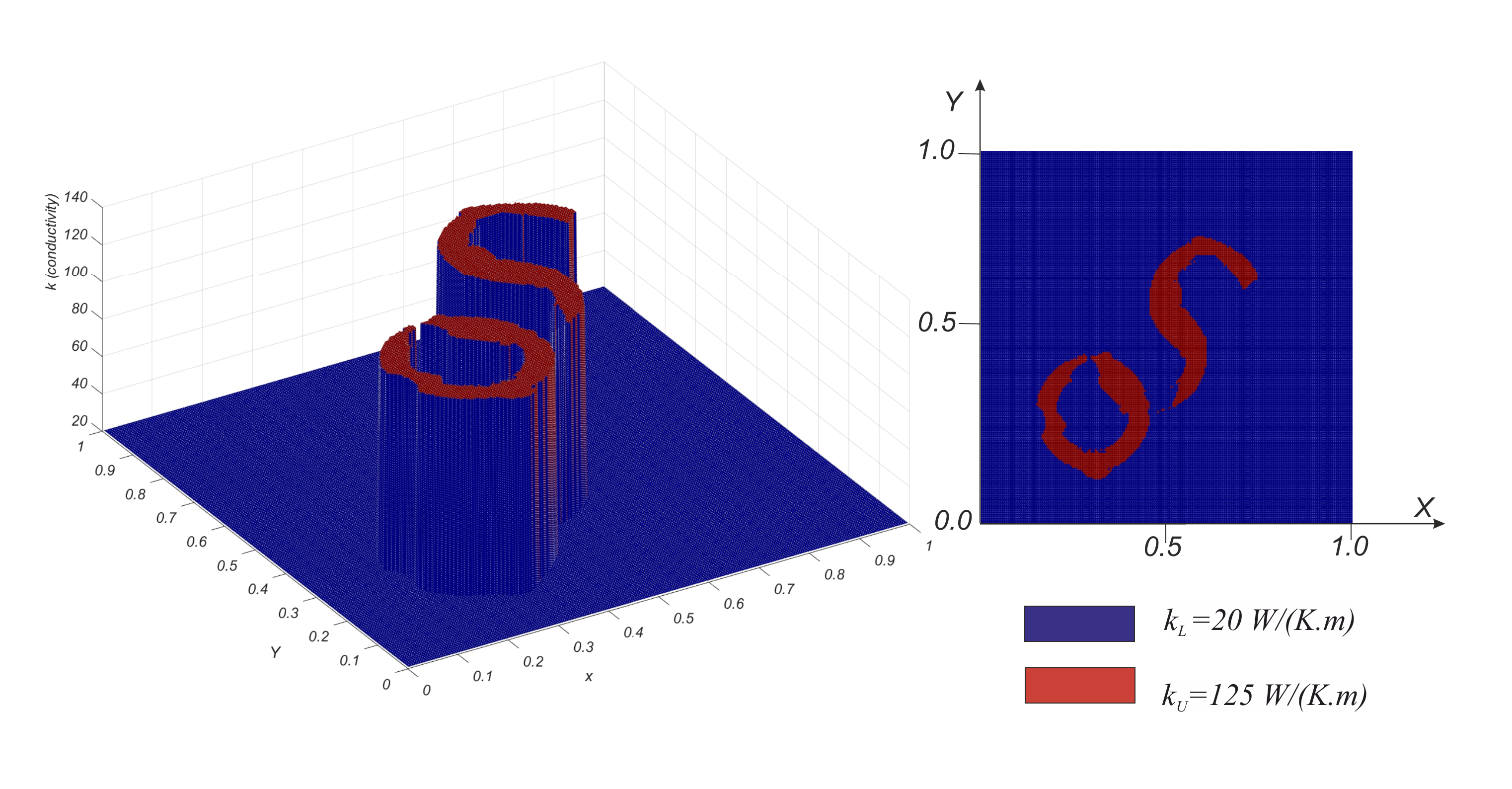} \\
    \caption{Reconstruction of $k(x,y)$ obtained using $\hat u(x,y)$ as in Figure \ref{uhat_4}, using a penalized least squares approach with $W_2(K)$ defined as in
        (\ref{penalizerW2}), Case IV.} \label{obtained-k-forcaseIV-3}
\end{figure}

%%%%%%%%%%%%%%%%%%%%%%%%%%%%%%%%%%%%%%%%%%%%%%%%%%%%%%%%%%%%%
\section*{Conclusions and future work}

In this article we introduced a method for solving the inverse problem of estimating the principal coefficient in a steady state elliptic diffusion equation. The
method is based on the discretization of an optimality equation derived from a variational approach which leads to a least squares problem based on a  fidelity term.
The method is followed by regularization consisting of adding an appropriate penalizer, encoding prior-information about the conductivity profile, leading to a the
problem of finding the minimizer of a generalized Tikhonov-Phillips functional.

Several numerical experiments with different types of discontinuous distributions for the leading coefficient function $k(x,y)$ were presented, showing that, with
the appropriate penalizer, the method yields excellent reconstructions of the true (unknown in practical problems) function.

There is clearly much room for improvements and generalizations. First we find it timely to point out that several numerical experiments have shown that adding more
functions in $H^1_{\Gamma_D,0}(\Omega)$ to the the set in (\ref{basisfunctions1}) does not improve the reconstructions in none of the cases. That is due to the
severe ill-posedness of the problem.

As it was shown, the success of the method is highly dependent on the appropriate choice of the regularization term. The penalizer $W_2$ defined in
(\ref{penalizerW2}) works very well for the case in which $k(x,y)$ takes only two prescribed values. However, in that case, although experiments showed that the
results are somewhat robust with respect to the choice of $\gamma$, designing rigorous analytic ways for estimating an optimal value of that threshold parameter
$\gamma$ is highly desirable.

A detailed analysis of the results and images show that the small artifacts obtained in the reconstruction of the function $k(x,y)$ are always associated to places
where $|\nabla u|=0$. Although this is reasonable since at those points $k$ cannot be restored, more research is necessary to come up with a way to avoid those
artifacts, perhaps taking into account a-priori information about the expected conductivity.

In regard to the design of appropriate penalizers, no general recipe is expected to be found which will work for arbitrary functions $k(x,y)$. It is not completely
clear, for instance, what would be a good choice for the penalizer $W(\cdot)$ in the case in which three (or more) possible conductivity values are present. Efforts
in all of these directions are currently under way.

%%%%%%%%%%%%%%%%%%%%%%%%%%%%%%%%%%%%%%%%%%%%%%%%%%%%%%%%%%
\section*{Acknowledgements}
 This work was supported in part by Consejo Nacional de Investigaciones Cient\'{\i}ficas y T\'{e}cnicas, CONICET, through grants PIP 2021-2023 number 11220200100806CO and PUE-IMAL number 22920180100041CO, by Agencia Nacional de Promoci\'{o}n de la Investigaci\'{o}n,
el Desarrollo Tecnol\'{o}gico y la Innovaci\'{o}n, througn grant PICT-2019-2019-01933 and by Universidad Nacional del Litoral, through grant CAI+D 2020 Nro.
50620190100069LI.

%% The Appendices part is started with the command \appendix;
%% appendix sections are then done as normal sections
%% \appendix

%% \section{}
%% \label{}

%% For citations use:
%%       \citet{<label>} ==> Jones et al. [21]
%%       \citep{<label>} ==> [21]
%%

%% If you have bibdatabase file and want bibtex to generate the
%% bibitems, please use
%%
%\bibliographystyle{elsarticle-num-names}
%%  \bibliography{<your bibdatabase>}
%% else use the following coding to input the bibitems directly in the
%% TeX file.
\bibliographystyle{Abbrv}

\bibliography{ref1,ref_2}
%% \bibitem[Author(year)]{label}
%% Text of bibliographic item

%\bibitem[ ()]{ref1,ref_2}

\end{document}